\newtheorem{thm}{Theorem}
\newtheorem{lem}[thm]{Lemma}
\newtheorem{cor}[thm]{Corollary}
\newtheorem{conj}[thm]{Conjecture}
\newtheorem{rem}[thm]{Remark}
\newcommand{\C}{\mathcal{C}}
\newcommand{\F}{\mathcal{F}}
\newcommand{\G}{\mathcal{G}}
\newcommand{\N}{\mathcal{N}}
\renewcommand{\P}{\mathcal{P}}
\newcommand{\X}{\mathcal{X}}
\newcommand{\bC}{\mathbf{C}}
\newcommand{\bF}{\mathbf{F}}
\newcommand{\bP}{\mathbf{P}}
\newcommand{\R}{\mathbb{R}}
\newcommand{\pathto}{\rightsquigarrow}
\newcommand{\Gr}{\mathop{\rm Gr}}
\renewcommand{\int}{\mathop{\rm int}}
\newcommand{\inv}{\mathop{\rm inv}}
\newcommand{\sgn}{\mathop{\rm sgn}}
\newcommand{\wt}{\mathop{\rm wt}}
\newcommand{\xing}{\mathop{\rm xing}}
\begin{document}

\title{Boundary Measurement Matrices for Directed Networks on Surfaces}
\author{John Machacek}
\address{Department of Mathematics, Michigan State University, USA}
\email{machace5@math.msu.edu}

\keywords{Boundary measurement, plabic graphs, totally nonnegative Grassmannian, Pl{\"u}cker coordinates}
\subjclass[2010]{Primary 14M15; Secondary 05C10}

\begin{abstract}
Franco, Galloni, Penante, and Wen have proposed a boundary measurement map for a graph on any closed orientable surface with boundary.
We consider this boundary measurement map which takes as input an edge weighted directed graph embedded on a surface and produces on element of a Grassmannian.
Computing the boundary measurement requires a choice of fundamental domain.
Here the boundary measurement map is shown to be independent of the choice of fundamental domain
Also, a formula for the Pl{\"u}cker coordinates of the element of the Grassmannian in the image of the boundary measurement map is given.
The formula expresses the Pl{\"u}cker coordinates as a rational function which can be combinatorially described in terms of paths and cycles in the directed graph.
\end{abstract}

\maketitle

\section{Introduction}
The totally nonnegative Grassmannian was defined by Postnikov~\cite{Pos06} and can be studied using edge weighted planar graphs embedded on a disk.
These edge weighted planar graphs and the totally nonnegative Grassmannian are connected to the physics of scattering amplitudes and $\N = 4$ super Yang-Mills~\cite{AHBC12}.
In the context of physics, the edge weighted planar graphs are usually called ``on-shell diagrams."
A key element of Postnikov's study of the totally nonnegative Grassmannian is the boundary measurement map which produces an element of the totally nonnegative Grassmannian for any edge weighted directed graph embedded in the disk.
Under a mild hypothesis on the graph, Talaska~\cite{Tal08} gives a formula for the Pl{\"u}cker coordinates of the element of the totally nonnegative Grassmannian corresponding to a given graph.
In~\cite{FGM14, FGPW15} a boundary measurement map for graphs on more general surfaces is proposed with the hopes of going beyond the ``planar limit'' of $\N = 4$ super Yang-Mills.

The definition of the boundary measurement map will be given later in this section, and in defining the boundary measurement we must make a choice of how to represent our directed graph in the plane.
The boundary measurement map turns out to be independent of this choice as we will see in Section~\ref{sec:ind}.
We will show in Section~\ref{sec:sign} how boundary measurement map can be obtained by signing the edges of a directed graph.
This technique of signing edges will allow us to unify two formulas of Talaska~\cite{Tal08, Tal12}. 
A formula for the Pl{\"u}cker coordinates corresponding to the boundary measurement map is given in Section~\ref{sec:Plucker}.
In Section~\ref{sec:gauge} we will show that the signs used in Section~\ref{sec:sign} are unique up to the gauge action.

\subsection{Weighted Path Matrices}
Let $N = (V,E)$ be a directed graph with finite vertex set $V$ and finite edge set $E$.
This means an edge $e \in E$ is an ordered pair $e = (i,j)$ for $i,j \in V$.
If $e = (i,j)$ then the edge $e$ is said to be directed from vertex $i$ to vertex $j$.
For each edge $e \in E$ of $N$ we associate a formal variable $x_e$.
We will work in $\R[[x_e : e \in E]]$ the ring of formal power series in the variables $\{x_e\}_{e \in E}$ with coefficients in $\R$.
As in~\cite{Pos06}, we will use the term $\emph{directed network}$ $N = (V,E)$ to refer to the directed graph $N = (V,E)$ along with edge weights $\{x_e\}_{e \in E}$.

A \emph{path} is a finite sequence of edges $P = (e_1, e_2, \cdots, e_l)$ where $e_k = (i_{k-1}, i_k)$ for $1 \leq k \leq l$.
If $P = (e_1, e_2, \cdots, e_l)$ where $e_1 = (i_0, i_1)$ and $e_l = (i_{l-1}, i_l)$, then $P$ is said to be a path from $i_0$ to $i_l$.
The path $P$ is said to be \emph{self avoiding} if $i_k \ne i_{k'}$ for $k \ne k'$.
The path $P$ is called a \emph{cycle} if $i_0 = i_{l}$, and we say the cycle is a \emph{simple cycle} when $i_{k} = i_{k'}$ if and only if $k = k'$ or $\{k,k'\} = \{0,l\}$.
We use the notation $P: i \pathto j$ to denote a path from $i$ to $j$.
When $P = (e_1, e_2, \cdots, e_l)$ we let
$$\wt(P) = \prod_{i=1}^l x_{e_i}$$
denote the \emph{weight} of the path $P$.

We order our vertex set $V$ and consider the $V \times V$ \emph{weighted path matrix} $M = M(N,\{x_e\}_{e \in E})$ with entries given by
$$M_{ij} = \sum_{P: i \pathto j} \wt(P)$$
for all $(i, j) \in V \times V$.

We let $\C(N)$ denote the set of all collections $\bC$ which consist of simple cycles that are pairwise vertex disjoint.
For $\bC \in \C(N)$ we define its weight as
$$\wt(\bC) = \prod_{C \in \bC} \wt(C)$$
and its sign as $\sgn(\bC) = (-1)^{|\bC|}$ where $|\bC|$ denotes the number of cycles in the collection $\bC$.
The empty collection $\varnothing$ is in $\C(N)$ with $\wt(\varnothing) = 1$ and $\sgn(\varnothing) = 1$.
We let $S_n$ denote the symmetric group on $[n] = \{1,2,\dots, n\}$ and consider elements $\pi \in S_n$ as bijections $\pi: [n] \to [n]$.
For $\pi \in S_n$ and any $I, J \subseteq V$ with $I = \{i_1 < i_2 < \cdots < i_n\}$ and $J = \{j_1 < j_2 < \cdots < j_n\}$ we let $\P_{I,J,\pi}$ denote the set of collections $\bP = (P_1, P_2, \dots, P_n)$ such that $P_k: i_k \pathto j_{\pi(k)}$ is self avoiding for each $k \in [n]$, and $P_k$ and $P_{k'}$ are vertex disjoint whenever $k \neq k'$.
For $\bP \in \P_{I,J,\pi}$ we define its weight as
$$\wt(\bP) = \prod_{P \in \bP} \wt(P)$$
and its sign as $\sgn(\bP) = \sgn(\pi)$.
Note if $\pi(k) = k$ we can have $P_k: i_k \pathto i_k$ be the empty path $P_k$ from $i_k$ to $i_k$ consisting of no edges, and in this case $\wt(P_k) = 1$.
We then let $\F_{I,J}(N)$ denote the collection of \emph{flows} from $I$ to $J$.
A flow from $I$ to $J$ is a pair $\bF = (\bP, \bC)$ such that $\bP \in \P_{I,J,\pi}$ for some $\pi \in S_n$, $\bC \in \C(N)$, and all paths in $\bP$ and cycles in $\bC$ are pairwise vertex disjoint.
For $\bF \in \F_{I,J}(N)$ with $\bF = (\bP, \bC)$ we define its weight as $\wt(\bF) = \wt(\bP) \wt(\bC)$ and its sign as $\sgn(\bF) = \sgn(\bP) \sgn(\bC)$.

Talaska's formula~\cite{Tal12} states
\begin{equation}
\Delta_{I,J}(M) = \frac{\sum_{\bF \in \F_{I,J}(N)} \sgn(\bF)\wt(\bF)}{\sum_{\bC \in \C(N)} \sgn(\bC)\wt(\bC)}
\label{eq:Talaska1}
\end{equation}
where $\Delta_{I,J}(M)$ denotes the minor of $M$ with rows indexed by $I$ and columns indexed by $J$.
Equation~(\ref{eq:Talaska1}) generalizes the Lindstr\"{o}m-Gessel-Viennot lemma~\cite{Lin73,GV85} which only applies to directed networks without directed cycles.
Fomin also provides of generalization of the  Lindstr\"{o}m-Gessel-Viennot lemma which allows for directed cycles~\cite{Fom01} where the sum is indexed by a minimal, but infinite, collection of paths.

\subsection{Boundary Measurement Matrices}

\begin{figure}
\centering
\includegraphics[scale=.7]{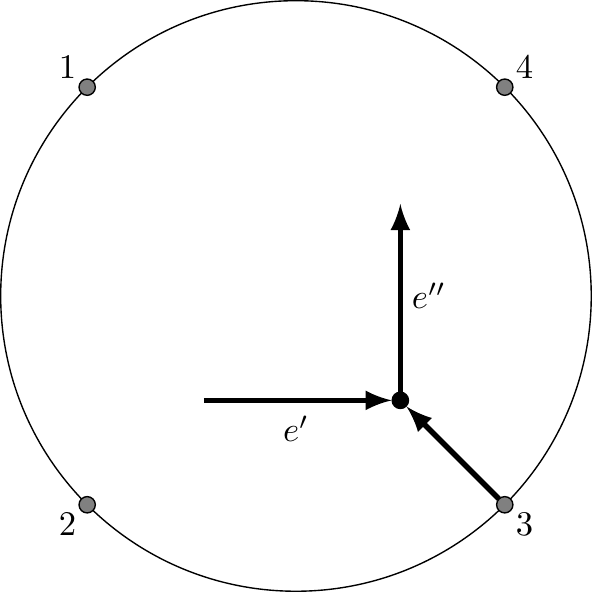}
\caption{An example of the edge order induced by the boundary vertex order. Here we say $e' < e''$ given the boundary vertices are ordered as usual with $1 < 2 < 3 < 4$. The direction of the edges is irrelevant for the induced edge ordering.}
\label{fig:edgeorder}
\end{figure}

Now consider the directed network $N = (V,E)$ embedded in a closed orientable surface with boundary $S$.
We call a vertex on the boundary of $S$ a \emph{boundary vertex} and an edge which is incident on a boundary vertex an \emph{external edge}.
We assume each boundary vertex is either a source or sink and that edges are embedded as smooth curves.
Let $K \subseteq V$ be the collection of boundary vertices.
Let $b$ denote the number of boundary components of $S$ and assume each boundary component is a smooth curve diffeomorphic to a circle.
We make $b-1$ cuts between pairs of boundary components on the surface $S$ to obtain a new surface $T$ with a single boundary component.
The cuts are made such that each cut is a smooth curve, no cut intersects any vertex of $N$, and cuts intersect edges of $N$ transversally.
The boundary $\partial T$ is then a piecewise smooth curve homeomorphic to a single circle.
We choose a piecewise smooth parameterization $\phi:[0,1] \to \partial T$ with $\phi(0) = \phi(1)$ (i.e. $\phi: S^1 \to \partial T$).
Throughout we will assume all parameterizations are piecewise smooth with nowhere zero derivative.
We order the boundary vertices so that they appear in order when traversing $\partial T$ according to $\phi$.
Thus we have a linear ordering of the vertices in $K$ which we denote by $<$ so that $K = \{i_1 < i_1 < \cdots < i_n\}$ with $i_j = \phi(t_j)$ for $0 \leq t_1 < t_2 < \cdots < t_n < 1$.
The linear ordering of the the boundary vertices induces an ordering on the set of edges incident on some external edge as demonstrated in Figure~\ref{fig:edgeorder}.
We also have a cyclic ordering which we denote $\prec$.
For $i,j,k \in K$ we write $i \prec j \prec k$ if $i < j < k$, $k < i < j$, or $j < k < i$.

\begin{figure}
\centering
\includegraphics{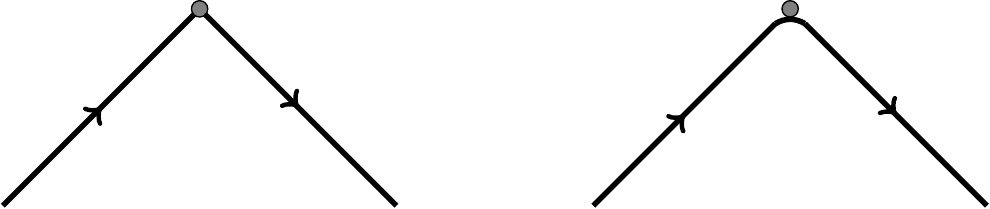}
\caption{Smoothing a piecewise smooth curve at a vertex.}
\label{fig:smoothing}
\end{figure}

When $S$ is a closed orientable surface with boundary of genus $g=0$ any network embedded on $S$ can be drawn in the plane.
In order to draw the directed network in the plane we must choose a boundary component of $S$ called external and identify this external boundary component with a circle bounding a disk in the plane.
We then draw the directed network inside this disk.
In Section~\ref{sec:ind} we will show that this choice of external boundary component does not have an impact on our results.
Consider a network $N$ on $S$ embedded in the plane and overlay the cuts used to construct $T$.
We will make use of both $S$ and $T$.
For any path $P:i \pathto j$ where $(i,j) \in I \times K$ we form a closed curve $C(P)$ in the plane as follows:
\begin{enumerate}
\item Traverse the path $P$ from $i$ to $j$ in $S$.
\item Follow the boundary of $T$ in our specified direction from $j$ to $i$.
\end{enumerate}
We want $C(P)$ to be a smooth curve.
Since we have assumed that all edges and boundary components are smooth curves the curve $C(P)$ will be piecewise smooth.
In order to work with a smooth curve we will approximate $C(P)$ by a smooth curve at cut points and around each vertex as in Figure~\ref{fig:smoothing}.
We will make no distinction between $C(P)$ and the smooth curve we approximate it by, and in some cases we may draw a piecewise smooth curve in place of a smooth curve.
Given any smooth closed curve in the plane define its \emph{rotation number} to be the degree of the map $\vec{\mathbf{T}} \circ \psi: S^1 \to S^1$ where $\psi: S^1 \to C$ is a parameterization of $C$ and $\vec{\mathbf{T}}: C \to S^1$ gives the unit tangent vector of each point.
The choice of which smooth curve is used as an approximation will not effect the rotation number.

Now consider the case where $S$ is a closed orientable surface with boundary $S$ of genus $g>0$.
Similarly to the genus zero case, we want to construct a closed curve in the plane for each path $P:i \pathto k$ where $(i,k) \in I \times K$.
We choose generators of the first homology group for the underlying closed surfaced without boundary.
The choice of homology generators does not affect our results as we will see in Section~\ref{sec:ind}.
The homology generators are chosen so that they do not intersect any vertices of $N$ and so that all intersections with edges of $N$ are transversal.
Also, the homology generators are chosen so that they intersect transversally with the cuts used to form $T$.
We then consider the \emph{punctured fundamental polygon} of $S$ which is the usual fundamental polygon of the underlying closed surface without boundary where the sides of the polygon correspond to homology generators, but we must remove some number of disks to create the boundary of the surface.
The punctured fundamental polygon has $4g$ sides with each corresponding to a homology generator, and when sides corresponding to the same homology generator are identified we obtain the surface $S$.
Note no vertex appears on any side of the punctured fundamental polygon and no edge or cut ever runs parallel to any side of the punctured fundamental polygon.
The punctured fundamental polygon represents a \emph{fundamental domain} of our surface.
See Figure~\ref{fig:PuncturedPoly} for an example of a punctured fundamental polygon.

\begin{figure}
\centering
\includegraphics{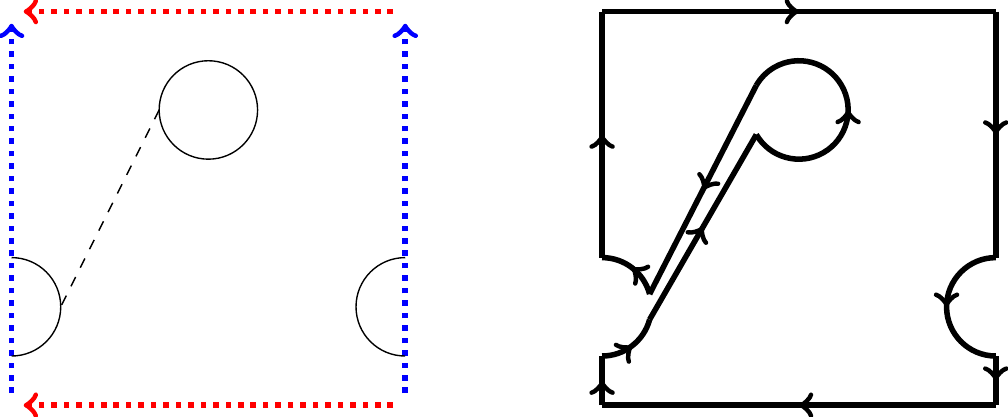}
\caption{On the left we have a punctured fundamental polygon for $S$ a torus with two boundary components with the cut between boundary components shown as a dashed line. On the right we have a closed curve around the boundary of $T$ drawn inside the fundamental domain.}
\label{fig:PuncturedPoly}
\end{figure}

When we have a network $N$ on $S$ with genus $g > 0$ we draw $N$ in the plane inside a single fundamental domain of $S$ and overlay the cuts used to construct the surface $T$.
Given any path $P:i \pathto j$ for $(i,j) \in I \times K$ we form a closed curve $C(P)$ in the plane, similarly to the genus zero case, by first traversing the path $P$ from $i$ to $j$ and then following the boundary of $T$ in our specified order from $j$ to $i$.
However, each time the closed curve leaves the chosen fundamental domain we connect the exit and entry points by following the sides of the punctured fundamental polygon clockwise from the exit point to the entry point.

Let $I \subseteq K$ be the collection of boundary vertices which are sources.
We consider the $I \times K$ matrix $A = A(N, \{ x_e\}_{e\in E})$ with entries given by $A_{ij} = M_{ij}$ for all $(i,j) \in I \times K$.
So, $A$ is obtained from $M$ by restricting to rows $I$ and columns $K$.
We also consider the $I \times K$ \emph{boundary measurement matrix} $B = B(N, \{x_e\}_{e \in E})$ with entries given by
$$B_{ij} = \sum_{P: i \pathto j} (-1)^{s_{ij} + r_P + 1} \wt(P)$$
for all $(i,j) \in I\times K$.
Here $s_{ij}$ denotes the number of elements of $I$ strictly between $i$ and $j$ with respect to $<$, and $r_P$ denotes the rotation number of $C(P)$.

This definition of the boundary measurement matrix for any closed orientable surface with boundary $S$ is due to Franco, Galloni, Penante, and Wen~\cite{FGPW15}.
Postnikov~\cite{Pos06} gave the original definition on the boundary measurement matrix in the case where the surface is a disk.
The boundary measurement matrix was considered for networks on the annulus by Gekhtman, Shapiro, and Vainshtein~\cite{GSV08} and for networks on any closed orientable genus zero surface with boundary by Franco, Galloni, and Mariotti~\cite{FGM14}.

Consider specializing the formal variables $x_e$ to real weights.
Notice the boundary measurement matrix is then a real $|I| \times |K|$ matrix of rank $|I|$.
Hence, for any directed network $N$ and choice of real weights, the boundary measurement matrix $B(N)$ describes an element of the real Grassmannian $\Gr(|I|, |K|)$.
This association of a directed network with real weights to an element of the Grassmannian is the known as the \emph{boundary measurement map}.
One feature of Postnikov's boundary measurement map applied to a directed network $N$ embedded in the disk is that when the edge weights are positive real numbers, the boundary measurement matrix $B(N)$ represents an element of the totally nonnegative Grassmannian.
The \emph{totally nonnegative Grassmannian} is defined to be elements of the Grassmannian such that all Pl{\"u}cker coordinates are nonnegative or nonpositive. 
That is, elements of the Grassmannian that can be represented by a matrix where each maximal minor is nonnegative.

When $S$ is a disk it is shown in~\cite{Pos06} that the maximal minors of $B(N)$ are subtraction-free rational expressions in the edge weights.
We call a directed network $N$ \emph{perfectly oriented} if each boundary vertex is a univalent source or sink and each interior vertex is trivalent and neither a source nor sink.
When a network is perfectly oriented, the interior vertices are of one of two types.
We distinguish the two types of interior vertices by coloring each interior vertex white or black.
White vertices have one incoming edge and two outgoing edges, and black vertices have two incoming edges and one outgoing edge.
For an example of a perfectly oriented network see Figure~\ref{fig:perfect}.

\begin{rem}
In~\cite{Pos06} it is shown how to transform any directed network $N$ on the disk to a perfectly oriented network $N'$ so that the boundary measurement matrix $B(N)$ is a specialization of the boundary measurement matrix of $B(N')$.
All transformations needed take place locally around a vertex, and hence will work on more general surfaces.
\label{rem:trans}
\end{rem}

If $N$ is perfectly oriented Talaska~\cite{Tal08} gives the following formula
\begin{equation}
\Delta_{I,J}(B) = \frac{\sum_{\bF \in \F_{I,J}(N)}\wt(\bF)}{\sum_{\bC \in \C(N)}\wt(\bC)}
\label{eq:Talaska2}
\end{equation}
where $J \subseteq K$ with $|I| = |J|$.
We notice Equation~(\ref{eq:Talaska2}) is very similar to Equation~(\ref{eq:Talaska1}) even though they describe minors of different matrices.
In Theorem~\ref{thm:sign} we will show that the boundary measurement matrix $B$ can be obtained from $A$ by a simple change of variables which explains the similarity of the formulas.
This theorem will also allow us to prove the following conjecture.

\begin{conj}[\cite{FGPW15}]
If $N = (V,E)$ is a perfectly oriented network embedded on a closed orientable surface with boundary, then for any $J \subseteq K$ with $|I| = |J|$
$$\Delta_{I,J}(B(N,\{x_e\}_{e \in E})) = \frac{\sum_{\bF \in \F_{I,J}(N)} \sigma(\bF) \wt(\bF)}{\sum_{\bC \in \C(N)} \sigma(\bC) \wt(\bC)}$$
for some $\sigma: \F_{I,J}(N) \cup \C(N) \to \{\pm 1 \}$.
\label{conj:flow}
\end{conj}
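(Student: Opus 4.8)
The plan is to obtain Conjecture~\ref{conj:flow} as a direct consequence of Theorem~\ref{thm:sign} together with Talaska's identity~(\ref{eq:Talaska1}) for minors of the weighted path matrix. Informally, Theorem~\ref{thm:sign} tells us that $B$ differs from $A$ only by signs, organized as a substitution on the edge weights, and since~(\ref{eq:Talaska1}) already expresses $\Delta_{I,J}(A)$ as a signed sum over flows and cycles, the only work is to track how the edge signs propagate from individual edges to whole flows and cycles and then to collect the resulting signs into the desired function $\sigma$.

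First I would record Theorem~\ref{thm:sign} in the form most convenient here. Writing $\tilde{A} = A(N, \{\epsilon_e x_e\}_{e\in E})$ for the matrix obtained from $A$ by the substitution $x_e \mapsto \epsilon_e x_e$ with the edge signs $\epsilon_e \in \{\pm 1\}$ produced by that theorem, the theorem identifies $B$ with $\tilde{A}$ up to conjugation by diagonal sign matrices; that is, there are signs $\alpha_i$ ($i \in I$) and $\beta_j$ ($j \in K$) with $B_{ij} = \alpha_i \beta_j \tilde{A}_{ij}$ for all $(i,j) \in I \times K$ (the factors $\alpha_i, \beta_j$ may be trivial). This is exactly the statement that the endpoint sign $(-1)^{s_{ij}}$ and the rotation number $r_P$ reorganize into edge-local signs $\epsilon_e$ together with the factorable boundary signs $\alpha_i \beta_j$. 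Restricting to rows $I$ and columns $J \subseteq K$ and pulling the diagonal factors out of the determinant then gives
$$
\Delta_{I,J}(B) = \Big(\prod_{i \in I}\alpha_i\Big)\Big(\prod_{j \in J}\beta_j\Big)\,\Delta_{I,J}(\tilde{A}),
$$
where the prefactor is a fixed sign $G = G(I,J)$ depending only on the (fixed) row and column sets of the minor.

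Next I would evaluate $\Delta_{I,J}(\tilde{A})$ by applying~(\ref{eq:Talaska1}) to the network $N$ carrying the signed weights $\{\epsilon_e x_e\}$. Since $A$ is the restriction of the weighted path matrix $M$ to rows $I$ and columns $K$, we have $\Delta_{I,J}(\tilde{A}) = \Delta_{I,J}(\tilde{M})$, and~(\ref{eq:Talaska1}) computes this as the quotient of $\sum_{\bF} \sgn(\bF)\,\eta(\bF)\wt(\bF)$ over $\sum_{\bC} \sgn(\bC)\,\eta(\bC)\wt(\bC)$. Here $\eta(\bF) = \prod_{e \in \bF}\epsilon_e$ and $\eta(\bC) = \prod_{e \in \bC}\epsilon_e$ are well defined because the paths and cycles making up a flow, and the cycles in a cycle collection, are pairwise vertex disjoint and hence edge disjoint; this multiplicativity is exactly what lets the substitution factor through the flow expansion, so that the signed weight of $\bF$ is $\eta(\bF)\wt(\bF)$ and that of $\bC$ is $\eta(\bC)\wt(\bC)$. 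Combining with the previous display and absorbing the fixed sign $G$ into the numerator, I would set
$$
\sigma(\bF) = G\,\sgn(\bF)\,\eta(\bF) \quad\text{for } \bF \in \F_{I,J}(N), \qquad \sigma(\bC) = \sgn(\bC)\,\eta(\bC) \quad\text{for } \bC \in \C(N).
$$
Each of these is a product of $\pm 1$'s, so $\sigma$ is a well-defined function into $\{\pm 1\}$, and substituting yields precisely the formula of Conjecture~\ref{conj:flow}.

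The routine points --- that $\tilde{A}$ is the restriction of the signed path matrix and that $\eta$ is multiplicative over vertex-disjoint pieces --- are straightforward. The genuine obstacle, and the place where the surface topology actually enters, is Theorem~\ref{thm:sign} itself: one must show that the global rotation-number sign $(-1)^{r_P}$ of the closed curve $C(P)$, built from the path $P$ and the return trip along $\partial T$ inside a fundamental domain, can be realized edge-locally as $\prod_{e \in P}\epsilon_e$ up to the factorable endpoint sign, and that this realization is independent of the chosen fundamental domain and homology generators. Granting Theorem~\ref{thm:sign}, the argument above is short and essentially formal; its whole force is carried by that theorem, after which the proof is the sign bookkeeping sketched here.
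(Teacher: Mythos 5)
Your proposal is correct and follows the paper's own route: the conjecture is deduced from Theorem~\ref{thm:sign} together with Equation~(\ref{eq:Talaska1}), with $\sigma$ absorbing $\sgn$ and the product of edge signs over the (edge-disjoint) pieces of each flow or cycle collection, exactly as in Corollary~\ref{cor:flow}. The only cosmetic difference is that Theorem~\ref{thm:sign} as stated is the exact equality $B(N,\{x_e\}_{e\in E}) = A(N,\{\epsilon_e x_e\}_{e\in E})$, so your diagonal factors $\alpha_i,\beta_j$ and the prefactor $G$ are all trivial.
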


Our main result is that Conjecture~\ref{conj:flow} is true.
It follows from Equation~(\ref{eq:Talaska1}) and Theorem~\ref{thm:sign} which will be proven in the Section~\ref{sec:sign}.
Corollary~\ref{cor:flow} gives a formula for the maximal minors of the boundary measurement matrix where we explicitly describe the sign function $\sigma$ in Conjecture~\ref{conj:flow}.
Recall, if we specialize the formal variables to some real values, the boundary measurement matrix represents an element of the real Grassmannian.
In this context Conjecture~\ref{conj:flow} and Corollary~\ref{cor:flow} are formulas for the Pl{\"u}cker coordinates of this element of the Grassmannian.

\section{Boundary Measurement Independence}
\label{sec:ind}

Given a directed network $N$ embedded on a closed orientable surface with boundary $S$, we must make some choices when computing the boundary measurement matrix $B(N)$.
The first choice we must make is how to place the cuts on the surface $S$ to obtain the surface $T$ with a single boundary component.
The boundary measurement does depend on this choice.
For example, boundary measurement matrices are
\begin{align*}
B(N) &= \begin{bmatrix} 1 & x\end{bmatrix} & B(N') &= \begin{bmatrix}1 & -x\end{bmatrix}
\end{align*}
for the directed networks in Figure~\ref{fig:dependence}.

\begin{figure}
\centering
\scalebox{.7}{\includegraphics{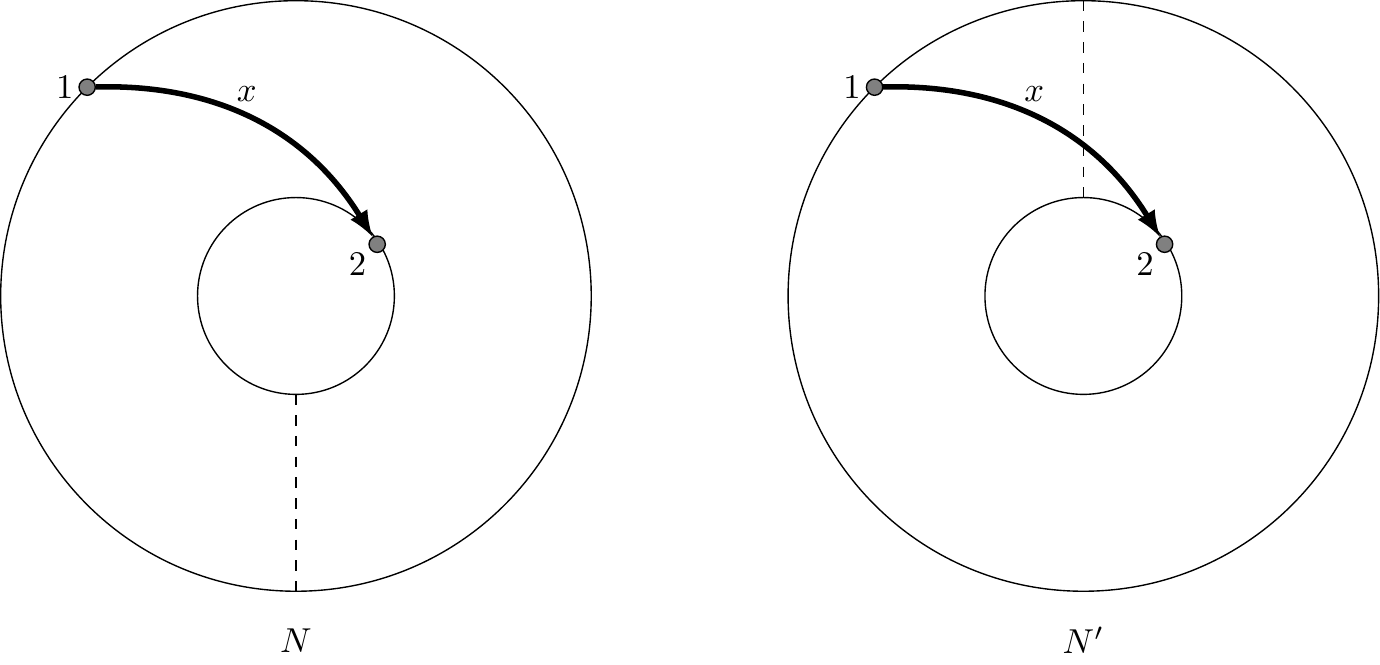}}
\caption{Two networks on the annulus each with a different choice of cut.}
\label{fig:dependence}
\end{figure}

Another choice we must make when computing the boundary measurement is how to represent the closed orientable surface with boundary in the plane.
For genus $g = 0$, we make a choice of which boundary component corresponds to the circle bounding the disk we draw our network inside.
For genus $g > 0$, we choose a fundamental domain.
In this section we will show that the boundary measurement does not depend on how we represent the surface in the plane.

Let $\psi: S^1 \to \mathbb{R}^2$ define a smooth closed curve $C$.
When $\psi(t_1) = \psi(t_2)$ for $t_1 \neq t_2$ we call $\psi(t_1)$ a \emph{self intersection point} of $C$.
If $\psi(t_1)$ is a self intersetion point of $C$ such that there exists a unique $t_2 \neq t_1$ with $\psi(t_1) = \psi(t_2)$ and $\{\psi'(t_1), \psi'(t_2)\}$ are linearly independent, we then call the self intersection point $\psi(t_1)$  \emph{simple}.
A smooth curve whose only self intersection points are simple is called \emph{normal}.
The rotation number of a normal curve differs in parity from the number of self intersections.
This was proven by Whitney in~\cite{Whi37} where it is also proven that any smooth curve can be transformed into a normal curve by small deformations without changing the curves rotation number.
When drawing closed curves inside a fundamental domain we sometimes may not connect exit and entry points along the sides of the punctured fundamental polygon, but rather draw some curve in the interior or exterior of the punctured fundamental polygon which has the same rotation number as the curve following the sides of the polygon.
This will be done to simplify the drawing of the curve and in some cases will be necessary to transform the curve into a normal curve.

Observe for any closed curve $C$ on a closed orientable surface with boundary $S$, we can construct a closed curve in the plane in the same way we do for the closed curves which come from paths in our oriented network.
Our next lemma will consider an arbitrary closed curve $C$ on $S$.
Given some representation of our surface in the plane, we will let $\hat{C}$ denote the corresponding closed curve in the plane.
Also, in the proof of the lemma we will consider a lift $C'$ of the closed curve $C$ to the universal cover of $S$ when the surface $S$ has genus $g > 0$.
Recall that each time a closed curve $C$ leaves the fundamental domain, we connected the exit and entry points along the boundary of the punctured fundamental polygon when constructing the closed curve $\hat{C}$ which lives in a single fundamental domain.
When doing this the tangent vector will make exactly one complete rotation.
To account for this we construct another curve $C''$ on the universal cover of $S$.
The curve $C''$ agrees with the curve $C'$ except we add a loop each time it crosses a homology generator.
See Figure~\ref{fig:closed} for an example of $C$, $\hat{C}$, $C'$ and $C''$.

\begin{figure}
\centering
\scalebox{.8}{\includegraphics{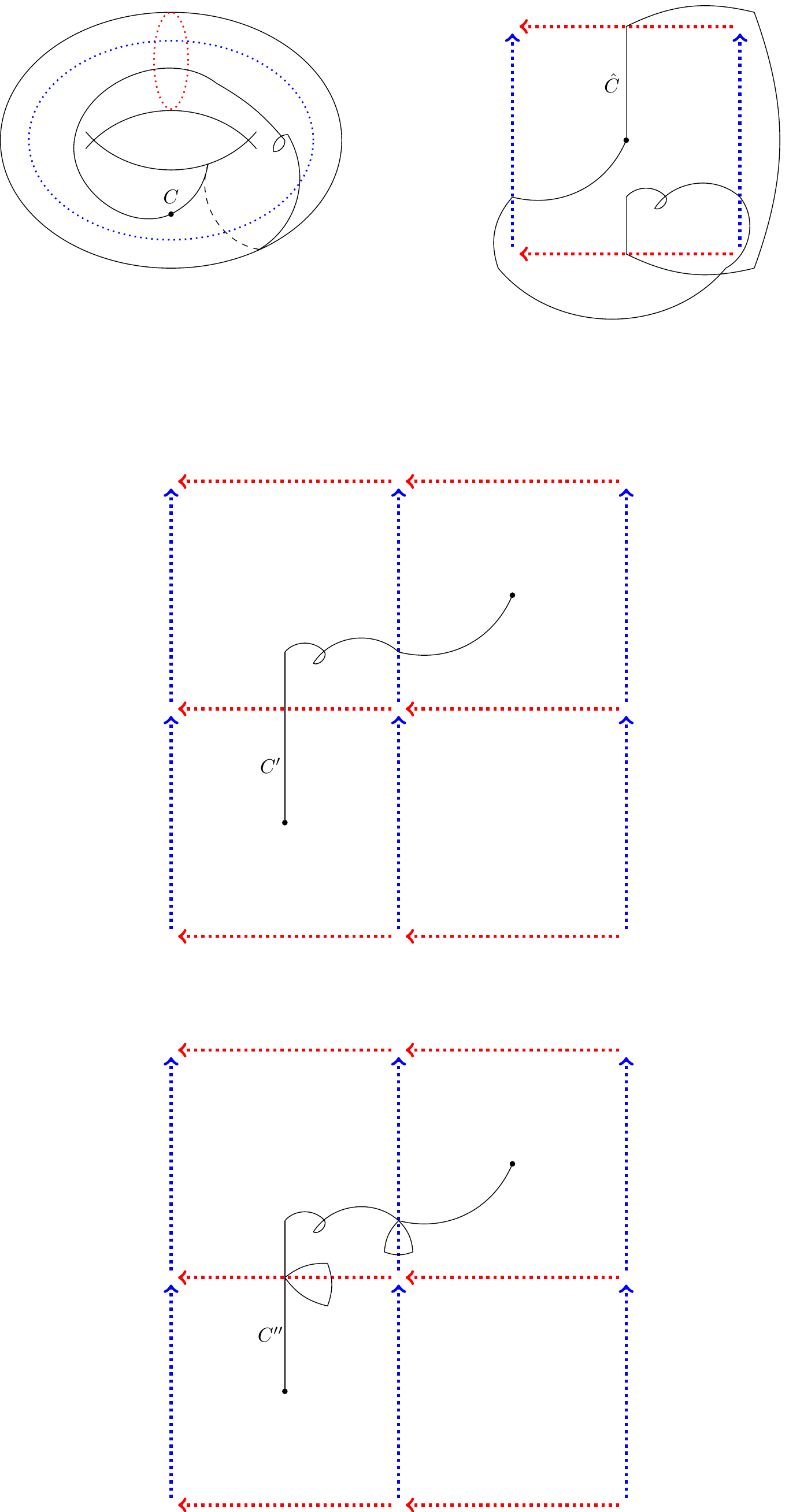}}
\caption{An example of a closed curve $C$ on the torus and the corresponding curves $\hat{C}$, $C'$ and $C''$.}
\label{fig:closed}
\end{figure}

\begin{lem}
Let $C$ be a closed curve on a closed orientable surface with boundary $S$  and let $\hat{C}$ be the closed curve corresponding to $C$ for some choice of representation of $S$ in the plane. 
The parity of the rotation number of $\hat{C}$ does not depend on the choice of representation of $S$ in the plane.
\label{lem:domain}
\end{lem}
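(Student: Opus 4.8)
The plan is to reduce the statement to a claim about self-intersections. By the result of Whitney quoted above, we may first deform $\hat{C}$ by small deformations to a normal curve without altering its rotation number, and then the parity of the rotation number equals the number of self-intersections of $\hat{C}$ plus one, modulo $2$. Thus it suffices to prove that the parity of the number of self-intersections of $\hat{C}$ is independent of the chosen representation of $S$ in the plane. The advantage of this reformulation is that the number of self-intersections is a purely topological quantity, preserved under any homeomorphism carrying one curve onto another, whereas the rotation number is not.

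For the genus zero case I would argue as follows. The surface $T$ obtained after the cuts is a disk, and the reconnection rule along $\partial T$ (following the fixed cuts, which are common to both representations) produces from $C$ a single curve $\tilde{C}$ on $T$ that does not depend on which boundary component of $S$ is chosen as external. Each representation is then simply an embedding $f_i \colon T \to \R^2$ of this disk, with $\hat{C} = f_i(\tilde{C})$. Since $f_2 \circ f_1^{-1}$ is a homeomorphism from $f_1(T)$ onto $f_2(T)$ taking the planar curve of the first representation to that of the second, the two curves have the same number of self-intersections, and the conclusion follows from the reduction above.

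The genus $g > 0$ case is the main obstacle, and the disk argument is unavailable because $T$ is no longer simply connected. Here I would pass to the universal cover, using the curves $C'$ and $C''$ introduced before the statement: since following the sides of the punctured fundamental polygon contributes exactly one full turn of the tangent vector per crossing of a homology generator, one has $\mathrm{rot}(\hat{C}) = \mathrm{rot}(C'')$, so, taking the loops of $C''$ small, the parity of the number of self-intersections of $\hat{C}$ equals the number of self-intersections of the lift $C'$ plus the number of times $C$ crosses the homology generators, modulo $2$. The difficulty is that \emph{neither} of these two counts is separately independent of the chosen fundamental domain: changing the generators by a mapping class group element (already for the torus, a single Dehn twist replaces homology coordinates $(a,b)$ by $(a-b,b)$) alters the generator-crossing count, and it simultaneously alters the self-intersections of $C'$. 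The crux is therefore to show that the \emph{sum} of the two counts is invariant modulo $2$. I expect to establish this by connecting any two fundamental domains through a sequence of elementary moves—generators of the mapping class group, realized as changes of the system of arcs reducing $T$ to a disk—and verifying that each such move changes the rotation number by an even integer, with Whitney's self-intersection formula used to balance the self-intersections created by the altered reconnections against the change in the number of generator crossings.
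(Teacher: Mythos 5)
Your overall strategy coincides with the paper's: reduce the parity of the rotation number to the parity of the self-intersection count via Whitney's theorem, handle genus zero directly, and for genus $g>0$ pass to the universal cover, introduce the lift $C'$ and the looped curve $C''$, and derive the relation $r \equiv r' + h \pmod 2$, where $h$ counts crossings of homology generators. Your genus-zero argument (the two planar pictures are two embeddings of the disk $T$ differing by a homeomorphism that carries one curve onto the other, hence they have equal self-intersection counts) is a sound and more explicit version of what the paper simply declares to be clear.

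The gap is in the genus $g>0$ case, which is the whole substance of the lemma. You correctly reduce to showing that $r'+h$ is invariant mod $2$, and you correctly observe that neither summand is separately invariant when the homology generators are allowed to change (a Dehn twist on the torus already changes $h \bmod 2$ for a fixed curve $C$). But at exactly that point you stop: ``I expect to establish this by connecting any two fundamental domains through a sequence of elementary moves \dots and verifying that each such move changes the rotation number by an even integer.'' That verification \emph{is} the assertion to be proved; you name no generating set of moves, and you give no mechanism by which the self-intersections created by the altered reconnections along the polygon boundary cancel modulo $2$ against the change in the generator-crossing count. As written, the proposal proves the lemma only for genus zero. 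For comparison, the paper closes this step by asserting that the parity of $h$ is determined by the homology class of $C$ and that $r'$ is computed on the universal cover, so that both summands are claimed to be choice-independent; your observation shows that this assertion itself requires more care when the generators vary, but identifying a difficulty in the shorter argument does not discharge your own burden of completing the longer one.
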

\begin{proof}
Recall, the parity of the rotation number of a closed curve in the plane depends only on the number of self intersections of the curve.
For the case genus $g = 0$, it is clear the number of self intersections of $\hat{C}$ does not depend of the representation of $S$ in the plane.

Now consider the case genus $g > 0$.
We denote the rotation number of $\hat{C}$ by $r$.
We let $C'$ be a lift of $C$ to the universal cover of $S$.
The universal cover is homeomorphic to $\R^2$.
Let $h$ denote the number of times $C$ intersects any homology generator.
Notice the parity of $h$ is determined by the homology class of $C$, and hence the parity of $h$ is independent of the choice of fundamental domain used to represent $S$ in the plane.

If $C$ is null homologous, then $C'$ is a closed curve in $\R^2$.
If $C$ is not null homologous, then $C'$ is not a closed curve in $\R^2$.
However, we can still define the rotation number of $C'$ since the unit tangent vector at the starting point and ending point of $C'$ will be the same.
In any case, let $r'$ denote the rotation number of $C'$.
Notice each time $C$ intersects any homology generator, the tangent vector to the curve $\hat{C}$ we make a complete rotation in the clockwise direction  on the portion of the curve which  connects the entry and exit points of the fundamental domain.
We can modify the curve $C'$ by adding a small loop in the clockwise direction each time $C'$ intersects a lift of a homology generator.
We let $C''$ denote this modified curve.
We can construct $C''$ such that there is then a map $\phi: C'' \to \hat{C}$ such that $\vec{\mathbf{T}} = \vec{\mathbf{T}} \circ \phi$.
Recall, $\vec{\mathbf{T}}$ gives the unit tangent vector of a curve.
Let $r''$ denote the rotation number of $C''$
It then follows that $r = r''$ and that $r'' = r' + h$.
Therefore $r = r' + h$, and the parity of the rotation number of $\hat{C}$ is independent of how $S$ is represented in the plane.
\end{proof}

\begin{thm}
The boundary measurement of a directed network $N$ on a closed orientable surface with boundary $S$ is independent of how we represent $S$ in the plane.
\end{thm}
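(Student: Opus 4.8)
The plan is to isolate the dependence of each entry $B_{ij}$ on the representation of $S$ in the plane and then invoke Lemma~\ref{lem:domain}. Recall that
$$B_{ij} = \sum_{P: i \pathto j} (-1)^{s_{ij} + r_P + 1} \wt(P).$$
The set of paths $P: i \pathto j$ is a purely combinatorial feature of the directed graph $N$, and the weight $\wt(P)$ is a product of edge variables; neither of these depends on how we draw $S$ in the plane. The quantity $s_{ij}$ counts the elements of $I$ lying strictly between $i$ and $j$ in the linear order $<$, which is determined by the cuts used to form $T$ and by the parameterization $\phi$ of $\partial T$ --- data fixed before, and independently of, any choice of representation. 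Thus the only representation-dependent ingredient in $B_{ij}$ is the parity of the rotation number $r_P$ entering the sign $(-1)^{s_{ij} + r_P + 1}$.

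Next I would observe that for each path $P: i \pathto j$ the closed curve $C(P)$ --- formed by traversing $P$ from $i$ to $j$ and then following $\partial T$ from $j$ back to $i$ --- is a closed curve on $S$ of exactly the type to which Lemma~\ref{lem:domain} applies, and that $r_P$ is precisely the rotation number of the planar curve $\widehat{C(P)}$ associated to a given representation. By Lemma~\ref{lem:domain} the parity of this rotation number is independent of the choice of representation of $S$ in the plane. Hence $(-1)^{r_P}$, and therefore each individual sign $(-1)^{s_{ij} + r_P + 1}$, is the same for every representation.

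Combining these two observations, every term $(-1)^{s_{ij} + r_P + 1} \wt(P)$ in the sum defining $B_{ij}$ is representation-independent, so each entry $B_{ij}$, and hence the full boundary measurement matrix $B$, is unchanged under a change of representation. The genus-zero case (the choice of external boundary component) and the genus-positive case (the choice of fundamental domain) are handled uniformly, since Lemma~\ref{lem:domain} covers both.

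The only real content lies in verifying that $C(P)$ genuinely satisfies the hypotheses of Lemma~\ref{lem:domain} and that $r_P$ coincides with the rotation number of $\widehat{C(P)}$; once the lemma is in hand, the remainder is a bookkeeping check that no other ingredient of $B_{ij}$ sees the representation. I expect no serious obstacle, as the lemma does all of the topological work.
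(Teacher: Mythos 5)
Your proposal is correct and follows essentially the same route as the paper: identify that the only representation-dependent ingredient of $B_{ij}$ is the parity of $r_P$, then invoke Lemma~\ref{lem:domain}. The paper's own proof is just a terser version of the same argument.
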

\begin{proof}
The only part of the boundary measurement matrix that depends on the representaion of $S$ in the plane is the rotation numbers $r_P$ of the closed curves $C(P)$ which correspond to paths $P$ in $N$.
In fact, the boundary measurement matrix only depends on the parity of $r_P$.
Therefore, the theorem then follows immediately from Lemma~\ref{lem:domain}.
\end{proof}

We have also made a choice to connect exit and entry points of a closed curve along the sides of the punctured fundamental polygon in the clockwise direction.
Observe the proof of Lemma~\ref{lem:domain} can easily be modified if we chose to connect the exit points in the counterclockwise direction.

\section{Signing Perfectly Oriented Networks}
\label{sec:sign}

In this section we prove a theorem which shows a relationship between the weighted path matrix and the boundary measurement matrix.
We show the boundary measurement matrix is the weighted path matrix with some edge weights thought of as negative.
That is, by replacing $x_e$ with $-x_e$ for some edges in $A(N, \{x_e\}_{e \in E})$ we obtain $B(N, \{x_e\}_{e \in E})$.
We first look at an example of signing the edges of a network.

\begin{figure}
\centering
\scalebox{.7}{\includegraphics{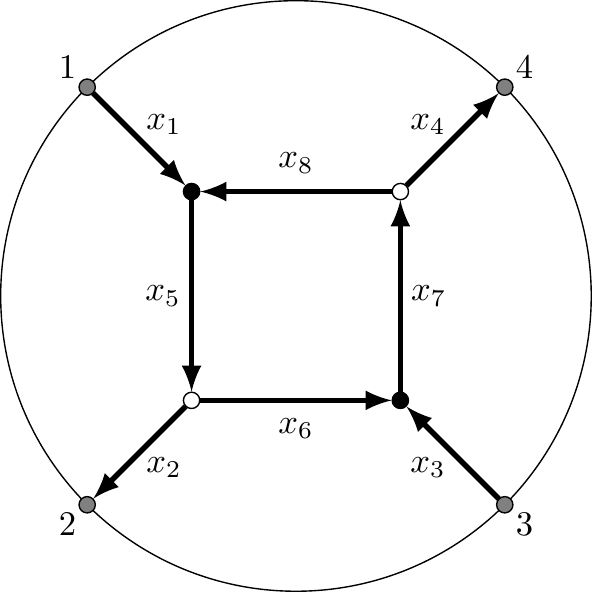}}
\caption{A perfectly oriented network on the disk.}
\label{fig:perfect}
\end{figure}

Let $N$ be the network in Figure~\ref{fig:perfect}. 
Here the boundary vertices are labeled to respect the usual ordering of the natural numbers so that $1<2<3<4$.
The weighted path matrix and boundary measurement matrix for $N$ are
$$
\begin{array}{c}
A=
\begin{bmatrix}
1 & \dfrac{x_1x_5x_2}{1 - x_5x_6x_7x_8} & 0 & \dfrac{x_1x_5x_6x_7x_4}{1 - x_5x_6x_7x_8} \\[15pt]
0 & \dfrac{x_3x_7x_8x_5x_2}{1 - x_5x_6x_7x_8} & 1 & \dfrac{x_3x_7x_4}{1 - x_5x_6x_7x_8}
\end{bmatrix}
\\[30pt]
B =
\begin{bmatrix}
1 & \dfrac{x_1x_5x_2}{1 + x_5x_6x_7x_8} & 0 & \dfrac{-x_1x_5x_6x_7x_4}{1+ x_5x_6x_7x_8} \\[15pt]
0 & \dfrac{x_3x_7x_8x_5x_2}{1 + x_5x_6x_7x_8} & 1 & \dfrac{x_3x_7x_4}{1 + x_5x_6x_7x_8}
\end{bmatrix}
\end{array}
$$
respectively.
Notice that $B$ can be obtained from $A$ by replacing $x_6$ with $-x_6$.
Theorem~\ref{thm:sign} shows that when $N$ is perfectly oriented $B$ can always be obtained from $A$ by a change a variable which gives each edge of $N$ a sign.
However, there is not a unique way to obtained $B$ from $A$.
For example, replacing $x_2$ and $x_5$ with $-x_2$ and $-x_5$ respectively is another possibility.
Theorem~\ref{thm:gauge} characterizes all possible ways to sign the edges of $N$.

Before stating the main theorem of this section we prove two lemmas which will be needed.

\begin{lem}
Let $N$ be a directed network embedded on a closed surface with boundary $S$ and let $T$ be the surface obtained after making cuts.
If $r_T$ is the rotation number of the closed curve which following the boundary of $T$ in a chosen fundamental domain, then $r_T \equiv 1 \pmod{2}$.
\label{lem:T}
\end{lem}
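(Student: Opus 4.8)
The plan is to show that, in a suitably chosen representation of $S$ in the plane, the curve following $\partial T$ can be drawn as a \emph{simple} closed curve, and then to combine the parity relation of Whitney~\cite{Whi37} with Lemma~\ref{lem:domain}. The key observation is that $r_T$ records a curve that depends only on $S$ and the cuts, not on the network paths, so it bounds a very controlled region. Concretely, capping the single boundary circle of $T$ with a disk recovers the underlying closed surface $\hat{S}$ of genus $g$, and this capping disk $\Delta$ is, in the plane drawing, the union of the $b$ hole-disks bounded by the boundary components of $S$ together with a thin band along each of the $b-1$ cuts. Since the cuts join the $b$ holes in a tree, $\Delta$ is a single disk, so $\partial T = \partial \Delta$ is null-homotopic in $\hat S$.

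First I would choose the representation. Because $\Delta$ is an embedded disk, its complement $\hat S \setminus \Delta = T$ is connected and the inclusion induces an isomorphism $H_1(T) \to H_1(\hat S)$; hence a full system of homology generators can be isotoped into $\hat S \setminus \Delta$. With respect to the resulting fundamental domain, $\Delta$ lies in the interior of the polygon, so the curve following $\partial T$ never leaves the fundamental domain and crosses no homology generator. Drawing the two sides of each cut as nearby non-crossing parallel strands then realizes $\partial T$ as a simple closed curve with no self-intersections. By the Whitney parity relation, a normal curve's rotation number differs in parity from its number of self-intersections, so having zero self-intersections forces the rotation number to be odd; thus $r_T \equiv 1 \pmod 2$ in this representation. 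Finally, regarding $\partial T$ as a closed curve on $S$, Lemma~\ref{lem:domain} shows that the parity of the rotation number is independent of the representation of $S$ in the plane, which upgrades the conclusion to an arbitrary fundamental domain. This argument is uniform in the genus: when $g=0$ there are no generators and $\Delta$ simply sits inside the disk.

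The step I expect to be the main obstacle is the geometric bookkeeping in the middle: verifying that $\partial T$ genuinely is the boundary of the embedded disk $\Delta$ (so that the two sides of each cut can be separated into non-crossing strands without introducing crossings), and confirming that a fundamental domain containing $\Delta$ is admissible in the precise sense required by the construction of the punctured fundamental polygon. One reassuring consistency check on the final value comes from Gauss--Bonnet applied to a flat immersion of $T$, which predicts $r_T = \chi(T) = (2 - 2g - 1) = 1 - 2g$; this is odd, in agreement with the claimed parity, although justifying an immersion whose boundary is exactly the prescribed curve is itself delicate, which is why I would route the proof through the simple-closed-curve argument rather than through Gauss--Bonnet.
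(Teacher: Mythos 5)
Your proposal is correct and follows essentially the same route as the paper: choose a representation in which the homology generators avoid $\partial T$, observe that the resulting planar curve has odd rotation number, and invoke Lemma~\ref{lem:domain} to remove the dependence on the representation. The paper simply declares the oddness ``clear'' in that situation, whereas you justify it via the capping disk $\Delta$ and Whitney's parity relation for a simple closed curve; this is a welcome elaboration rather than a different argument.
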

\begin{proof}
For genus $g = 0$ it is clear that $r_T \equiv 1 \pmod{2}$.
For genus $g > 0$ we can choose homology generators so that they do not intersect the boundary of $T$.
In this case it is again clear the $r_T \equiv 1 \pmod{2}$.
The general case for genus $g > 0$ then follows from Lemma~\ref{lem:domain}.
\end{proof}

For any path $P:i \pathto j$ we can form the closed curve $C'(P)$ by traversing the path $P$ from $i$ to $j$ and then following the boundary of $T$ from $j$ to $i$ opposite to our choosen direction. We let $r'_P$ denote the rotation number of $C'(P)$.
Our next lemma shows that we can use $r'_P$ in place of $r_P$ and the boundary measurement matrix will not change.

\begin{lem}
Let $N$ be a directed network embedded on a closed surface $S$ with boundary and $P$ is a path in $N$, then $r'_P \equiv r_P \pmod{2}$.
\label{lem:anti}
\end{lem}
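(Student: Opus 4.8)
The plan is to compute the difference $r_P - r'_P$ directly in terms of $r_T$ and the local geometry at the endpoints $i$ and $j$, and then invoke Lemma~\ref{lem:T}. Write $\partial T$, traversed in the chosen direction, as a concatenation of two arcs $a$ and $b$, where $a$ runs from $j$ to $i$ and $b$ runs from $i$ to $j$; thus $C(P) = P\cdot a$ and $C'(P) = P\cdot b^{-1}$, where $b^{-1}$ denotes $b$ traversed backwards. Since the boundary of $T$ is smooth at the boundary vertices $i$ and $j$ (they are merely marked points on the smooth curve $\partial T$), the net turning $\tau$ of the unit tangent along $a$ and along $b$ satisfies $\tau(a) + \tau(b) = 2\pi r_T$, while reversing an arc negates its turning, so $\tau(b^{-1}) = -\tau(b)$. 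After a small deformation making $P$ meet $\partial T$ transversally at its endpoints (which by Whitney's theorem changes no parity), all smoothed corners have exterior angle in $(-\pi,\pi)$.

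First I would record each rotation number as the sum of its arc turnings and its smoothed-corner exterior angles. Writing $\delta_i,\delta_j\in(-\pi,\pi)$ for the exterior angles of $C(P)$ at its two corners (where $P$ meets $a$) and $\delta'_i,\delta'_j$ for those of $C'(P)$, we have $2\pi r_P = \tau(P) + \tau(a) + \delta_i + \delta_j$ and $2\pi r'_P = \tau(P) - \tau(b) + \delta'_i + \delta'_j$. At each endpoint the boundary direction used by $C'(P)$ is exactly the reverse of the one used by $C(P)$, so $\delta'_i \equiv \delta_i + \pi$ and $\delta'_j \equiv \delta_j + \pi \pmod{2\pi}$; reducing into $(-\pi,\pi)$ gives $\delta_i - \delta'_i = \pi\,\sgn(\delta_i)$ and $\delta_j - \delta'_j = \pi\,\sgn(\delta_j)$. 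Subtracting the two expressions and using $\tau(a)+\tau(b) = 2\pi r_T$ yields $r_P - r'_P = r_T + \tfrac12\big(\sgn(\delta_i) + \sgn(\delta_j)\big)$.

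The crux of the argument, and the step I expect to be the main obstacle, is to show that $\delta_i$ and $\delta_j$ have the \emph{same} sign, so that $\tfrac12(\sgn(\delta_i)+\sgn(\delta_j)) = \pm 1$ and $r_P - r'_P = r_T \pm 1$ is even by Lemma~\ref{lem:T}. This I would establish by an orientation argument: the interior of $T$ lies consistently on one side of the oriented boundary $\partial T$, the path $P$ lives in that interior, it arrives at $j$ out of the interior, and it leaves $i$ into the interior. Fixing local coordinates in which $\partial T$ runs in the positive $x$-direction with the interior above, the incoming tangent of $P$ at $j$ points downward and the outgoing tangent of $P$ at $i$ points upward, so in both cases the signed angle from $P$ to the boundary direction is positive; hence $\sgn(\delta_i) = \sgn(\delta_j)$ (and both flip together if the chosen direction is reversed). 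The remaining care is the genus $g>0$ case, where $a$, $b$, and $b^{-1}$ acquire extra pieces along the sides of the punctured fundamental polygon; since these contribute equally to $\tau(a)+\tau(b)$ and to $\tau(b^{-1}) = -\tau(b)$, and since only the parity of $r_P - r'_P$ is at stake, the identity persists, with any discrepancy in the clockwise-versus-counterclockwise connection convention absorbed by the parity invariance already proved in Lemma~\ref{lem:domain}.
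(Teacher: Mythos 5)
Your proof is correct and takes essentially the same route as the paper: both arguments reduce the claim to the identity $r_P - r'_P = r_T \pm 1$ and then invoke Lemma~\ref{lem:T}. The paper obtains this identity by reversing $C'(P)$ and concatenating it with $C(P)$ to get the closed curve that traverses $P$, then $\partial T$, then $P$ in reverse, while you verify the same identity by an explicit turning-angle computation at the two corners where $P$ meets $\partial T$ --- a more detailed execution of the same idea.
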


\begin{proof}
Let $P:i \pathto j$ be a path from $i$ to $j$ in $N$.
We claim $r_P - r'_P = r_T \pm 1$.
To see this draw $C(P)$ and $C'(P)$ together in the same fundamental domain.
We then reverse the direction of $C'(P)$ and observe that we traverse the boundary of $T$  once and also traverse the path $P$ once from $i$ to $j$ as well as once in reverse from $j$ to $i$.
Hence we can compute $r_P - r'_P$ by considering the rotation number of the closed curve obtained by  first traversing $P$, then traversing the boundary of $T$, and finally traversing the path $P$ in reverse.
Thus $r_P - r'_P = r_T \pm 1$ and it follows by Lemma~\ref{lem:T} that $r'_P \equiv r_P \pmod{2}$.
\end{proof}

So, Lemma~\ref{lem:anti} shows that the direction in which we parameterize the boundary of $S$ does not affect the boundary measurement. We now state and prove our theorem on signing edges.

\begin{thm}
If $N = (V,E)$ is a perfectly oriented network embedded on a closed orientable surface with boundary, then there exists a collection $\{\epsilon_e\}_{e \in E} \in \{\pm 1\}^E$ such that
$$B(N,\{x_e\}_{e \in E}) = A(N, \{\epsilon_e x_e\}_{e \in E}).$$
\label{thm:sign}
\end{thm}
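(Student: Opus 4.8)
We must produce a sign assignment $\{\epsilon_e\}_{e \in E}$ so that the entry $A_{ij} = \sum_{P: i \pathto j} \wt(P)$, after the substitution $x_e \mapsto \epsilon_e x_e$, reproduces $B_{ij} = \sum_{P: i \pathto j} (-1)^{s_{ij} + r_P + 1} \wt(P)$. Since the weighted-path sum is taken entry-by-entry over paths $P: i \pathto j$, the substitution turns each monomial $\wt(P) = \prod_{k} x_{e_k}$ into $\bigl(\prod_{k}\epsilon_{e_k}\bigr)\wt(P)$, so it suffices to find signs with the property that for every path $P: i \pathto j$ (with $(i,j)\in I \times K$) we have $\prod_{e \in P} \epsilon_e = (-1)^{s_{ij} + r_P + 1}$. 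The essential content of the theorem is therefore that this path-indexed sign is \emph{multiplicative over edges}, i.e.\ that it can be realized as an edge product, uniformly over all paths simultaneously.

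**Strategy.** The plan is to reduce the target sign $(-1)^{s_{ij}+r_P+1}$ to a product of local, edge-by-edge contributions. The quantity $s_{ij}$ (boundary vertices strictly between $i$ and $j$) is a property of the endpoints only, and the rotation number $r_P$ of the closed curve $C(P)$ is the part that depends on the internal geometry of the path; the main work is to show that $(-1)^{r_P}$ factors edgewise. The tool is the Whitney formula recalled just before Lemma~\ref{lem:domain}: the parity of the rotation number of a normal closed curve equals (up to a fixed parity shift) the number of self-intersections of the curve. So I would first express $(-1)^{r_P}$ in terms of the self-crossing count of $C(P)$, and then decompose that crossing count as a sum of pairwise contributions.

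**Key steps.** First, since $N$ is perfectly oriented, every interior vertex is trivalent (white with out-degree $2$, or black with in-degree $2$) and the boundary vertices are univalent, so a path $P: i \pathto j$ is determined by its edge sequence and each self-avoiding constituent is geometrically a concatenation of smooth edges whose crossings can be localized. Second, I would decompose $r_P \pmod 2$ using Whitney: write $r_P \equiv \#\{\text{self-crossings of } C(P)\} + 1 \pmod 2$. The curve $C(P)$ has three sources of crossings: crossings among edges of the network (interior crossings), crossings of $P$ with the cuts and homology generators, and the contribution of traversing $\partial T$. The parity of the $\partial T$ part is controlled by Lemma~\ref{lem:T} ($r_T \equiv 1 \pmod 2$), and by Lemma~\ref{lem:anti} the choice of direction of traversal does not matter. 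Third — and this is the heart — I would argue that each network edge $e$ contributes a fixed parity to the crossing count of $C(P)$, independent of the rest of $P$: namely, each edge crosses the cuts and homology generators a definite number of times (determined by how $e$ sits relative to the cuts/generators, since these are chosen transversal to edges and disjoint from vertices), and crossings between distinct edges can be absorbed into the $s_{ij}$/endpoint bookkeeping because at a trivalent vertex the relevant local crossing pattern is fixed by the colors. Setting $\epsilon_e = (-1)^{(\text{number of cuts and homology generators crossed by } e)}$ (with the appropriate local correction at the edge's white/black endpoint) should give $\prod_{e\in P}\epsilon_e = (-1)^{r_P + c}$ for a constant $c$ absorbing the $s_{ij}+1$ offset.

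**Main obstacle.** The delicate point is the \emph{additivity of the self-crossing parity over edges}: a priori, the number of self-crossings of $C(P)$ is not simply the sum over edges of a per-edge quantity, because two distinct edges of $P$ can cross each other, and the $\partial T$-arc can cross $P$ in ways that depend on the global position of the endpoints. I expect the argument to work precisely because we only need the answer modulo $2$ and because the endpoint-dependent part has been isolated into $s_{ij}$. Concretely, the hard step is to verify that, modulo $2$, every crossing that is \emph{not} attributable to a single edge's passage across a cut/generator is either (a) canceled in pairs, or (b) a function of $(i,j)$ alone and hence matched by $(-1)^{s_{ij}+1}$. Establishing this cleanly — probably by an inductive deformation that removes one crossing at a time while tracking its parity, using Whitney's invariance of the rotation number under small deformations — is where the real content lies; the perfect orientation hypothesis (univalent boundary, trivalent interior, no interior sources/sinks) is what guarantees the local crossing structure is rigid enough for this edgewise factorization to hold.
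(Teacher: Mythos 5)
Your reduction of the theorem to the statement that $\prod_{e\in P}\epsilon_e = (-1)^{s_{ij}+r_P+1}$ for every path $P: i \pathto j$ is exactly the right target (it is property~(\ref{eq:diamond}) in the paper's proof), but the central step --- that this path-indexed sign is multiplicative over edges --- is precisely what you do not establish. You propose $\epsilon_e = (-1)^{\#\{\text{cuts and homology generators crossed by } e\}}$ together with an unspecified ``local correction'' at white/black endpoints, and you yourself flag the additivity of the self-crossing parity as the hard step; that gap is the whole theorem. Already in the disk, where there are no cuts and no homology generators, your base recipe gives $\epsilon_e = +1$ for every edge, yet the network of Figure~\ref{fig:perfect} requires the product of signs around the interior cycle with weight $x_5x_6x_7x_8$ to be $-1$ (this is how $1-x_5x_6x_7x_8$ in $A$ becomes $1+x_5x_6x_7x_8$ in $B$), so everything rides on the local correction, which is never defined. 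Moreover, the Whitney route is structurally awkward for an edgewise factorization: a self-crossing of $C(P)$ is a pairwise event (two strands of the path, or a strand against the $\partial T$ arc), so attributing each crossing to a single edge needs a genuine argument --- especially for paths that traverse an edge several times, where $C(P)$ contains overlapping arcs that must first be perturbed apart before Whitney's formula even applies, and the resulting crossings depend on the perturbation.

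The paper avoids this entirely by inducting on the number of interior vertices: choose a boundary source $i_0$, delete its trivalent (white or black) neighbor, split $i_0$ into two boundary vertices to obtain a smaller perfectly oriented network $\tilde N$, take signs for $\tilde N$ by induction, and modify finitely many external-edge signs, verifying case by case (using the cyclic position of $i$, $i_0$, $j$ together with Lemmas~\ref{lem:T} and~\ref{lem:anti}) how $s_{ij}$ and $r_P$ change under the splitting. If you want to rescue a direct, non-inductive argument, the workable version is not a crossing count but a tangent-direction count: fix a generic direction and write $r_P$ as a sum of contributions from edges and from the smoothed corners at vertices; perfect orientation lets you attribute the corner at a white vertex to the outgoing edge used and the corner at a black vertex to the incoming edge used, so every corner of every path lands on a unique edge. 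Either way, a concrete sign assignment and a verification are required; as written, your proposal correctly identifies the difficulty but does not overcome it.
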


\begin{proof}
Let $N$ be a perfectly oriented network with vertex set $V$ and edge set $E$.
To show $B(N,\{x_e\}) = A(N, \{\epsilon_e x_e\})$ it suffices to show that the path $P:i \pathto j$ for any $(i,j) \in I \times K$ has the following property:
\begin{equation}
\wt(P)|_{\{\epsilon_e x_e\}_{e \in E}} = (-1)^{s_{ij} + r_P + 1} \wt(P)
\tag{$\diamond$}
\label{eq:diamond}
\end{equation}
When this is true for a choice of signs $\{\epsilon_e\}_{e\in E}$ we will say the path $P$ has property~(\ref{eq:diamond}).
We assume the network has at least one boundary source, or else there is no boundary measurement matrix.

Recall that $K$ denotes the set of boundary vertices of $N$, and we have an ordering of the boundary vertices.
We fix the following notation, if $j \in K$ is a boundary vertex we let $e_j$ denote the unique external edge which is incident on $j$ and write $\epsilon_j$ for $\epsilon_{e_j}$.
It can happen that $e_{j_1} = e_{j_2}$ for $j_1 \ne j_2$, in this case we will consider distinct signs $\epsilon_{j_1}$ and $\epsilon_{j_2}$ on half edges with the sign on the edge being the product $\epsilon_{j_1} \epsilon_{j_2}$.
We induct on the number of interior vertices.
If there are no interior vertices, then the result is true since each path consists of a single edge.

For the inductive step we chose any boundary source $i_0$ and construct a network $\tilde{N}$ with one fewer interior vertex.
The edge set of $\tilde{N}$ will be denoted $\tilde{E}$.
We will inductively chose signs $\{\tilde{\epsilon}_e\}_{e \in \tilde{E}}$ so that each path in $\tilde{N}$ has property~(\ref{eq:diamond}) and show how to modify these signs to give a collection $\{\epsilon_e\}_{e \in E}$ so that each path in the $N$  has property~(\ref{eq:diamond}).
Recall that for two boundary vertices $i$ and $j$ of $N$ we let $s_{ij}$ denote the number of boundary sources strictly between them in $N$.
For two  boundary vertices $i$ and $j$ of $\tilde{N}$ we let $\tilde{s}_{ij}$ denote the number of boundary sources strictly between them in $\tilde{N}$.
The inductive step falls into one of three cases depending on the boundary source $i_0$ and its unique neighboring vertex.

If $i_0$ is adjacent to a white vertex with outgoing edges $e'$ and $e''$ we then remove the white vertex and split $i_0$ into two boundary sources $i'_0 < i''_0$ as shown in Figure~\ref{fig:SplitWhite}.
Choose signs $\{\tilde{\epsilon}_e\}$ for the edges of $\tilde{N}$ by induction so that all paths in $\tilde{N}$ have property~(\ref{eq:diamond}).
We define the signs $\{\epsilon_e\}$ as follows
\begin{center}
\begin{tabular}{ll}
 $\epsilon_j = \tilde{\epsilon}_j$ &for $j \in K$ with $j < i_0$\\
$\epsilon_{e'} = \tilde{\epsilon}_{e'}$ & \\
$\epsilon_{i_0} = +1$ & \\
$\epsilon_{e''} = -\tilde{\epsilon}_{e''}$ \\
$\epsilon_j =  -\tilde{\epsilon}_j$ &for $j \in K$ with $j > i_0$\\
$\epsilon_e = \tilde{\epsilon}_e$ &otherwise
\end{tabular}
\end{center}
and now verify the collection of signs $\{\epsilon_e\}$  are valid.

\begin{figure}
\centering
\scalebox{.7}{\includegraphics{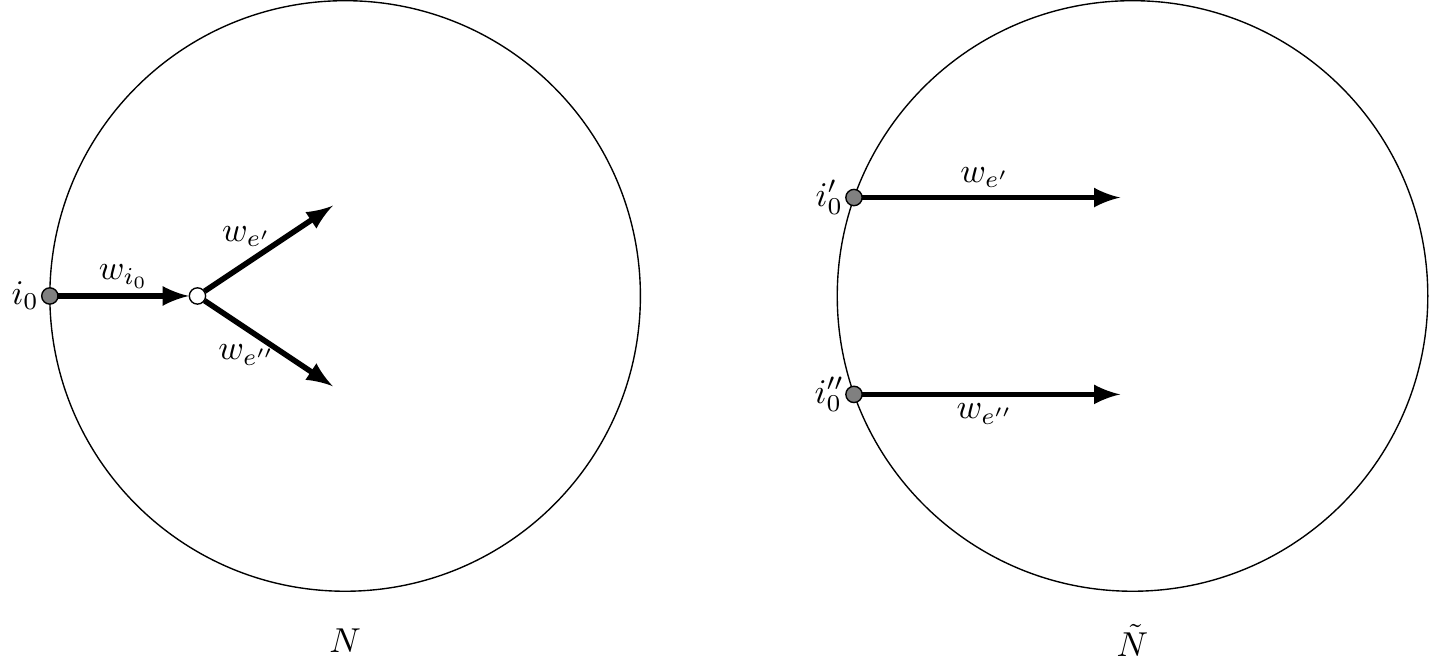}}
\caption{Splitting a white vertex}
\label{fig:SplitWhite}
\end{figure}

Consider a path $P: i \pathto j$ in $N$ with $i \ne i_0$.
The path $P$ corresponds to a path $\tilde{P}:i \pathto j$ in $\tilde{N}$ with $r_P = r_{\tilde{P}}$.
If $i,j < i_0$, then $s_{ij} = \tilde{s}_{ij}$ and $P$ has property~(\ref{eq:diamond}) since the modification does not introduce any sign change to $P$.
If $i < i_0 < j$ or $j < i_0 < i$, then $s_{ij} = \tilde{s}_{ij} - 1$ and $P$ has property~(\ref{eq:diamond}) since the modification introduces one sign change to $P$.
If $i,j > i_0$, then $s_{ij} = \tilde{s}_{ij}$ and $P$ has property~(\ref{eq:diamond}) since the modification introduces two sign changes to $P$.

Next consider a path $P:i_0 \pathto j$ in $N$. 
The path $P$ corresponds either to a path $\tilde{P}':i'_0 \pathto j$ or $\tilde{P}'':i''_0 \pathto j$.
First consider the case $P$ corresponds to $\tilde{P}'$.
If $j < i_0$, then $s_{ij} = \tilde{s}_{ij}$ and $P$ has property~(\ref{eq:diamond}) since the modification does not introduce any sign change to $P$.
If $i_0 < j$, then $s_{ij} = \tilde{s}_{ij} - 1$  and $P$ has property~(\ref{eq:diamond}) since the modification introduces one sign change to $P$.
Next consider the case $P$ corresponds to $\tilde{P}''$.
If $j < i_0$, then $s_{ij} = \tilde{s}_{ij}-1$ and $P$ has property~(\ref{eq:diamond}) since the modification introduces one sign change to $P$.
If $i_0 < j$, then $s_{ij} = \tilde{s}_{ij}$ and $P$ has property~(\ref{eq:diamond})  since the modification introduces two sign changes to $P$.
Therefore the signs $\{\epsilon_e\}$ are valid in this case.

If $i_0$ is adjacent to a black vertex we then  remove the black vertex and split $i_0$ into two boundary vertices $i'_0 < i''_0$ one of which will be a sink and the other of which will be a source.
We now consider the case where $i'_0$ is a sink and $i''_0$ is a source as shown in Figure~\ref{fig:SplitBlack1}.
Choose signs $\{\tilde{\epsilon}_e\}$ for the edges of $\tilde{N}$ by induction so that all paths in $\tilde{N}$ have property~(\ref{eq:diamond}).
We define the signs $\{\epsilon_e\}$ as follows
\begin{center}
\begin{tabular}{ll}
$\epsilon_{e'} = -\tilde{\epsilon}_{e'}$ & \\
$\epsilon_{i_0} = +1$ & \\
$\epsilon_e = \tilde{\epsilon}_e,$ &otherwise
\end{tabular}
\end{center}
and now verify the collection of signs $\{\epsilon_e\}$  as defined satisfy our rule. 

\begin{figure}
\centering
\scalebox{.7}{\includegraphics{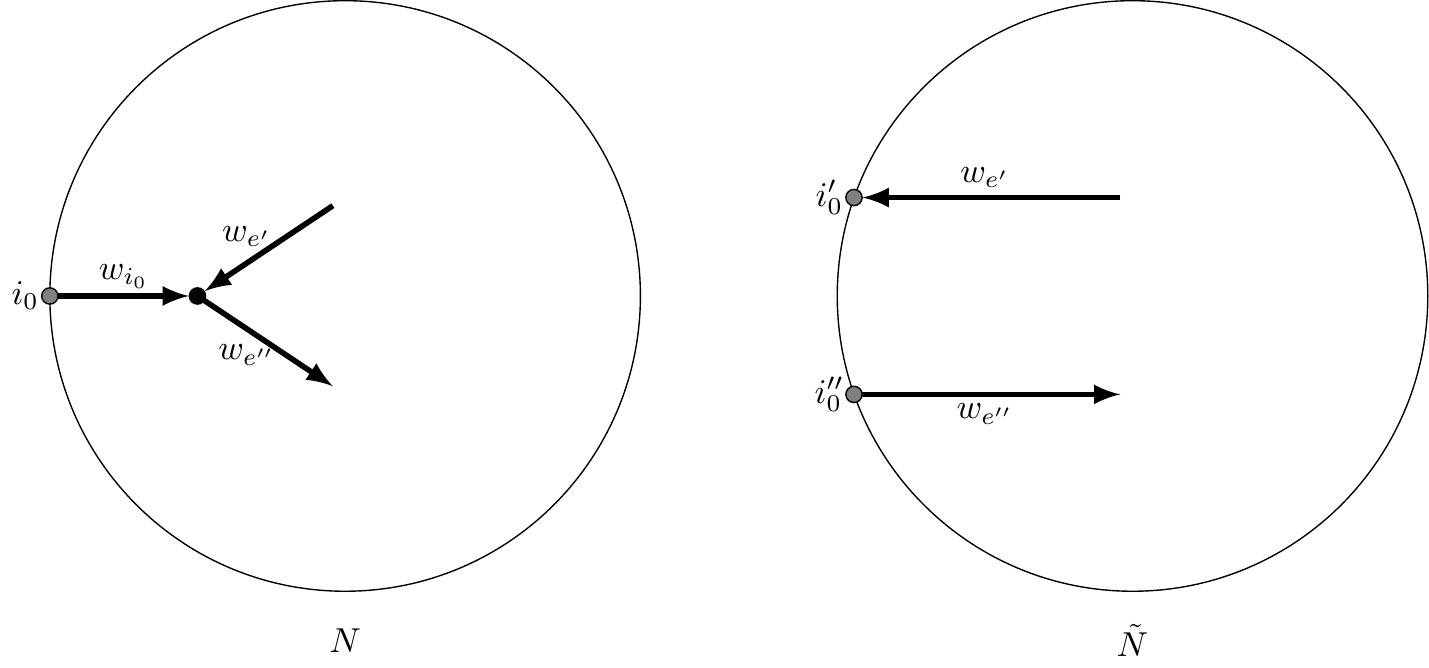}}
\caption{Splitting a black vertex in the case that $i'_0$ is a sink and $i''_0$ is a source}
\label{fig:SplitBlack1}
\end{figure}

Consider a path $P: i \pathto j$ in $N$ with $i \ne i_0$.
If $P$ does not use the edge $e'$, then $P$ corresponds to a path $\tilde{P}:i \to j$ in $\tilde{N}$ with $r_{P} = r_{\tilde{P}}$ and $s_{ij} = \tilde{s}_{ij}$.
If this is the case, then it is clear $P$ has property~(\ref{eq:diamond}).
Otherwise $P$ traverses the edge $e'$ some number of times.
Let $l+1$ be the number of times $P$ traverses $e'$ for $l \geq 0$.
The path $P$ corresponds to the concatenation of paths $\tilde{P}':i \pathto i'_0$, $\tilde{P}_k:i''_0 \pathto i'_0$ for $1 \leq k \leq l$, and $\tilde{P}'':i''_0 \pathto j$.
Now the sign of the product of the weights of these paths in $\tilde{N}$ is
$$(-1)^{\tilde{s}_{i i'_0} + r_{\tilde{P}'} + 1} (-1)^{\sum_{k=1}^{l} (r_{\tilde{P}_k} + 1)}(-1)^{\tilde{s}_{i''_0 j} +  r_{\tilde{P}''} + 1}$$
since $\tilde{s}_{i'_0 i''_0} = 0$. 
The sign of the path $P$ in $N$ will be
$$(-1)^{\tilde{s}_{i i'_0} + r_{\tilde{P}'} + 1} (-1)^{\sum_{k=1}^{l} (r_{\tilde{P}_k} + 1)}(-1)^{\tilde{s}_{i''_0 j} +  r_{\tilde{P}''} + 1}(-1)^{l+1}$$
since we pick up an addition factor of $-1$ each time we traverse $e'$.
Simplifying the sign of $P$ is
$$(-1)^{\tilde{s}_{i i'_0} + \tilde{s}_{i''_0 j} + 1+ r_{\tilde{P}'} + \sum_{k=1}^{l} r_{\tilde{P}_k}  + r_{\tilde{P}''}}. $$
We observe that
\begin{center}
\begin{tabular}{ll}
$s_{ij} =\tilde{s}_{i i'_0} + \tilde{s}_{i''_0 j} + 1$  &if $i \prec i_0 \prec j$\\
$s_{ij} = \tilde{s}_{i i'_0} + \tilde{s}_{i''_0 j}$ & if $j \prec i_0 \prec i,$\\
\end{tabular}
\end{center}
and so the sign of $P$ is
\begin{center}
\begin{tabular}{ll}
$(-1)^{s_{ij} + r_{\tilde{P}'} + \sum_{k=1}^{l} r_{\tilde{P}_k}  + r_{\tilde{P}''}}$ & if $i \prec i_0 \prec j$\\
$(-1)^{s_{ij} + 1+ r_{\tilde{P}'} + \sum_{k=1}^{l} r_{\tilde{P}_k}  + r_{\tilde{P}''}}$ &if  $j \prec i_0 \prec i.$
\end{tabular}
\end{center}
Finally we observe that
\begin{center}
\begin{tabular}{ll}
$r_P + 1 \equiv r_{\tilde{P}'} + \sum_{k=1}^{l} r_{\tilde{P}_k}  + r_{\tilde{P}''} \pmod{2}$ & if $i \prec i_0 \prec j$\\
$r_P \equiv r_{\tilde{P}'} + \sum_{k=1}^{l} r_{\tilde{P}_k}  + r_{\tilde{P}''} \pmod{2}$ & if $j \prec i_0 \prec i$\\
\end{tabular}
\end{center}
and it follows that $P$ has property~(\ref{eq:diamond}).
See Figure~\ref{fig:close} for the case of the disk.
More generally when the surface is not the disk the boundary will still be a circle and Lemma~\ref{lem:T} shows that the rotation number of traversing the boundary will always be odd, and hence can be thought of as shown in Figure~\ref{fig:close}.

\begin{figure}
\centering
\scalebox{.7}{\includegraphics{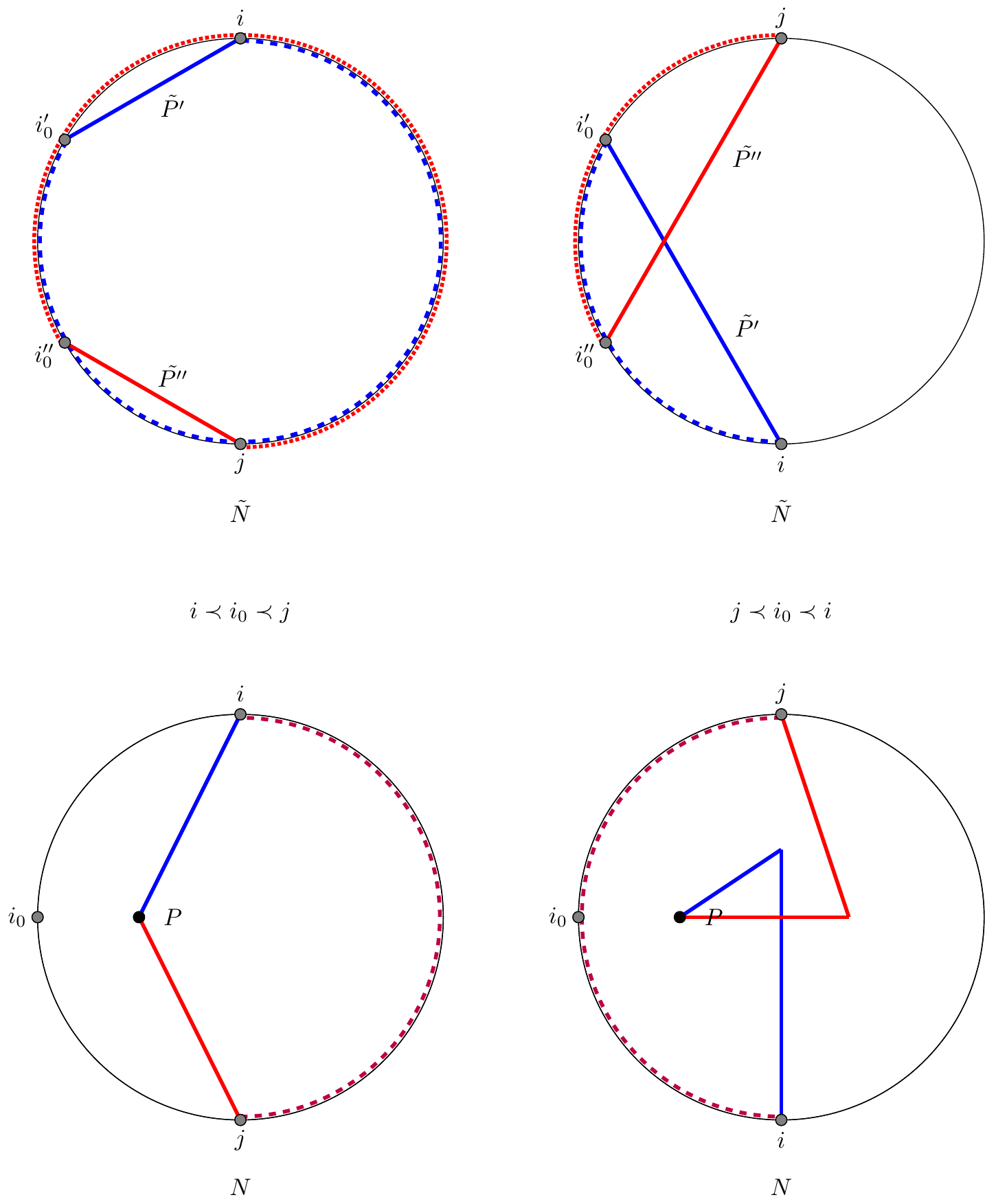}}
\caption{Closing paths in $\tilde{N}$. The case $i \prec i_0 \prec j$ is shown on the right while the case $j \prec i_0 \prec i$ is shown on the left.}
\label{fig:close}
\end{figure}

Now consider a path $P: i_0 \pathto j$ in $N$.
If $P$ does not use the edge $e'$, then $P$ corresponds to a path $\tilde{P}:i''_0 \to j$ in $\tilde{N}$ with $r_{P} = r_{\tilde{P}}$ and $s_{ij} = \tilde{s}_{i''_0 j}$.
If this is the case, then it is clear $P$ has property~(\ref{eq:diamond}).
Otherwise $P$ traverses the edge $e'$ some number of times.
Let $l$ be the number of times $P$ traverses $e'$ for $l > 0$.
In this case $P$ corresponds to the concatenation of paths $\tilde{P}_k:i''_0 \pathto i'_0$ for $1 \leq k \leq l$, and $\tilde{P}'':i''_0 \pathto j$.
Now the sign of the product of the weights of these paths in $\tilde{N}$ is
$$(-1)^{\sum_{k=1}^{l} (r_{\tilde{P}_k} + 1)} (-1)^{\tilde{s}_{i''_0 j} + r_{\tilde{P}''} + 1}$$
since $\tilde{s}_{i'_0 i''_0} = 0$. 
The sign of the path $P$ in $N$ will be
$$(-1)^{\sum_{k=1}^{l} (r_{\tilde{P}_k} + 1)} (-1)^{\tilde{s}_{i''_0 j} +  r_{\tilde{P}''} + 1} (-1)^l$$
since we pick up an addition factor of $-1$ each time we traverse $e'$.
Simplifying, the sign of $P$ is
$$(-1)^{s_{ij} + \sum_{k=1}^{l} r_{\tilde{P}_k}  + r_{\tilde{P}''} + 1} $$
since $s_{ij} = \tilde{s}_{i''_0 j}$.
The equality
$$r_P = \sum_{k=1}^{l} r_{\tilde{P}_k}  + r_{\tilde{P}''}$$
implies that $P$ has property~(\ref{eq:diamond}).

The final case is again $i_0$ is adjacent to a black vertex, and we remove the black vertex and split $i_0$ into two boundary vertices $i'_0 < i''_0$.
This time we consider the case where $i'_0$ is a source and $i''_0$ is a sink as shown in Figure~\ref{fig:SplitBlack2}.
This case will be identical to the previous case of splitting a black vertex, after applying Lemma~\ref{lem:anti} and forming closed curves in the opposite direction,  with the subcases  $i \prec i_0 \prec j$ and $j \prec i_0 \prec i$ reversed.
\begin{figure}
\centering
\scalebox{.7}{\includegraphics{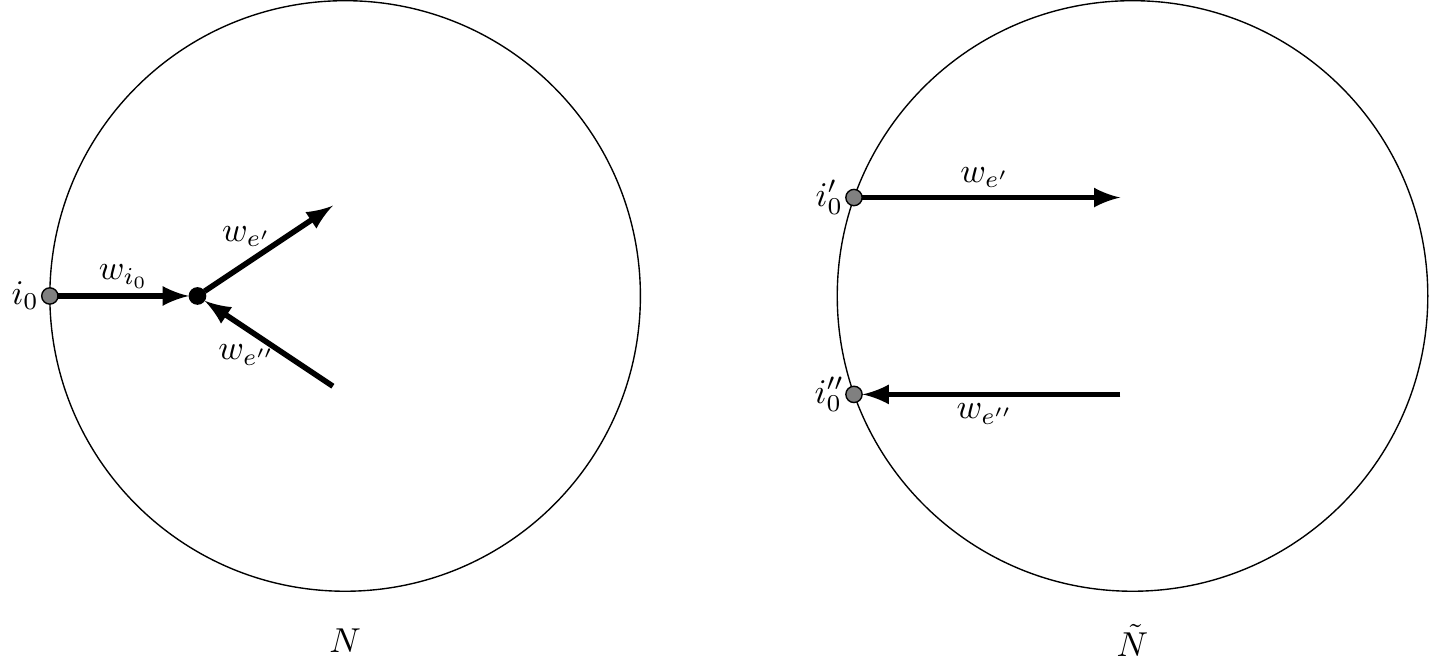}}
\caption{Splitting a black vertex in the case that $i'_0$ is a source and $i''_0$ is a sink}
\label{fig:SplitBlack2}
\end{figure}
\end{proof}

\begin{figure}
\centering
\scalebox{.7}{\includegraphics{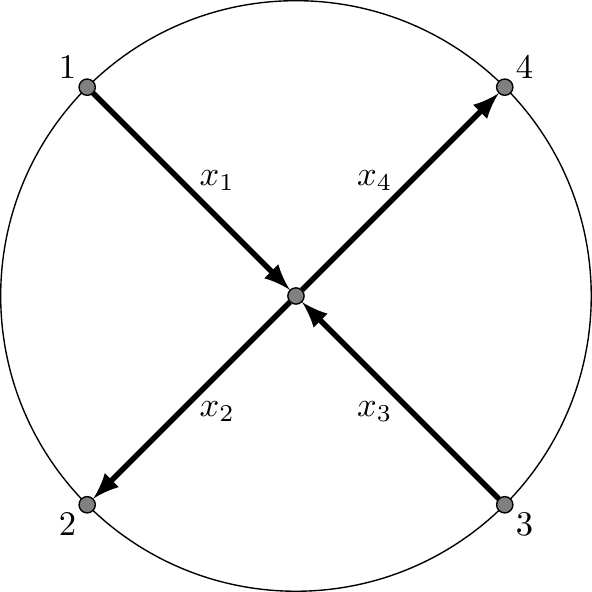}}
\caption{A network which is not perfectly oriented}
\label{fig:non}
\end{figure}

Theorem~\ref{thm:sign} need not be true when $N$ is not a perfectly oriented network.
See Figure~\ref{fig:non} for an example of a network for which Theorem~\ref{thm:sign} does not hold.
The boundary measurement matrix for the network in Figure~\ref{fig:non}  is
$$B = 
\begin{bmatrix}
1 & x_1 x_2 & 0 & -x_1 x_4\\
0 & x_2 x_3 & 1 & x_3 x_4
\end{bmatrix}.
$$
The matrix $B$ cannot be obtained from the weighted path matrix for this example.
Notice the second column of $B$ would require $x_1$ and $x_3$ receive the same sign, while the fourth column of $B$ would require $x_1$ and $x_3$ receive opposite signs.
However, as mentioned in Remark~\ref{rem:trans} the network in Figure~\ref{fig:non} can be transformed to a perfectly oriented network.
In this case it turns out the perfectly oriented network we get after the transformation is the network in Figure~\ref{fig:perfect} for which we have already seen how to sign the edges.
We include Algorithm~\ref{alg:sign} for finding a signing of edges as in Theorem~\ref{thm:sign}.
This recursive algorithm exactly corresponds to the induction used in the proof.

\begin{algorithm}
\caption{Signing Edges of a Perfectly Oriented Network}\label{alg:sign}
\begin{algorithmic}
\Require A perfectly oriented network $N = (V, E)$.
\Function{FindSigns}{$N$}
\If{$\int(V) = \varnothing$}
\For{$e = (i,j) \in E$}
\State $\epsilon_e = (-1)^{s_{ij} + r_e + 1}$
\EndFor
\State \Return $\{\epsilon_e\}_{e \in E}$
\Else
\State Choose boundary source $i_0 \in I$ adjacent to some interior vertex.
\State Let $e_0 = (i_0, v_0)$ be the unique edge incident on $i_0$.
\State Let $e'$ and $e''$ be to two edges different from $e_0$ incident on $v_0$ with $e' < e''$.
\State $\tilde{N} \gets \Call{Split}{N,i_0,v_0,e_0,e',e''}$
\State $\{\tilde{\epsilon}_e\}_{e \in E(\tilde{N}) }\gets \Call{FindSigns}{\tilde{N}}$
\State \Return $\Call{ModifySigns}{N,i_0,v_0,e_0,e',e'', \{\tilde{\epsilon}_e\}_{e \in E(\tilde{N}) }}$
\EndIf
\EndFunction

\Function{Split}{$N$,$i_0$,$v_0$,$e_0$,$e'$,$e''$}
\State $\tilde{V} \gets (V \setminus \{i_0, v_0\}) \cup \{i'_0, i''_0\}$
\State $\tilde{K} = (K \setminus \{i_0\}) \cup \{i'_0, i''_0\}$
\State Order $\tilde{K}$ by $i_1 < i'_0 < i''_0 < i_2$ for all $i_1, i_2 \in K$ such that $i_1 < i_0 < i_2$
\If{$e' = (v_0,x)$} $\tilde{e}' \gets (i'_0,x)$ \EndIf
\If{$e' = (x,v_0)$} $\tilde{e}' \gets (x,i'_0)$ \EndIf
\If{$e'' = (v_0,x)$} $\tilde{e}'' \gets (i''_0,x)$ \EndIf
\If{$e'' = (x,v_0)$} $\tilde{e}'' \gets (x,i''_0)$ \EndIf
\State $\tilde{E} \gets (E \setminus \{e_0,e',e''\}) \cup \{\tilde{e}', \tilde{e}''\}$
\State $\tilde{N} \gets (\tilde{V}, \tilde{E})$
\State \Return $\tilde{N}$
\EndFunction

\Function{ModifySigns}{$N,i_0,v_0,e_0,e',e'', \{\tilde{\epsilon}_e\}_{e \in E(\tilde{N})}$}
\If{$\tilde{e}' = (v_0,x)$ and $\tilde{e}'' = (v_0, y)$}
\State $\epsilon_{i_0} \gets +1$
\State $\epsilon_{e'} \gets \tilde{\epsilon}_{\tilde{e}'}$
\State $\epsilon_{e''} \gets \tilde{\epsilon}_{\tilde{e}''}$
\For{$j \in K$ with $j < i_0$}
\State $\epsilon_{j} \gets \tilde{\epsilon}_j$
\EndFor
\For{$j \in K$ with $j > i_0$}
\State $\epsilon_{j} \gets -\tilde{\epsilon}_j$
\EndFor
\For{All other edges $e \in E$}
\State $\epsilon_e \gets \tilde{\epsilon}_e$
\EndFor
\EndIf
\If{$\tilde{e}' = (x,v_0)$ and $\tilde{e}'' = (v_0, y)$}
\State $\epsilon_{i_0} \gets +1$
\State $\epsilon_{e'} \gets -\tilde{\epsilon}_{\tilde{e}'}$
\State $\epsilon_{e''} \gets \tilde{\epsilon}_{\tilde{e}''}$
\For{All other edges $e \in E$}
\State $\epsilon_e \gets \tilde{\epsilon}_e$
\EndFor
\EndIf
\If{$\tilde{e}' = (v_0,x)$ and $\tilde{e}'' = (y,v_0)$}
\State $\epsilon_{i_0} \gets +1$
\State $\epsilon_{e'} \gets \tilde{\epsilon}_{\tilde{e}'}$
\State $\epsilon_{e''} \gets -\tilde{\epsilon}_{\tilde{e}''}$
\For{All other edges $e \in E$}
\State $\epsilon_e \gets \tilde{\epsilon}_e$
\EndFor
\EndIf
\EndFunction
\end{algorithmic}
\end{algorithm}

\section{A Formula for Pl{\"u}cker Coordinates}
\label{sec:Plucker}

Notice that from Theorem~\ref{thm:sign} it follows that Conjecture~\ref{conj:flow} is true.
We now want to give an explicit formula for the minors of the boundary measurement matrix.
In order to do this we must first review some concepts and results that can be found in~\cite{Pos06} and~\cite{Tal08}.
Take $I, J \subseteq [n]$ with $|I| = |J|$.
Let $\pi: I \to J$ be a bijection with $\pi(i) = i$ for all $i \in I \cap J$.
A pair $(i_1, i_2) \in I \times I$ where $i_1 < i_2$ is called a \emph{crossing} of $\pi$ if the following condition holds
$$(i_1 - \pi(i_2))(\pi(i_2) - \pi(i_1))(\pi(i_1) - i_2))(i_2 - i_1) < 0.$$
This condition is equivalent to the chord from $i_1$ to $\pi(i_1)$ crossing the chord from $i_2$ to $\pi(i_2)$ when the elements of $[n]$ are placed in cyclic order on the boundary of a disk.
Thinking of $I$ as a collection of boundary sources  and $J$ as a collection of boundary vertices, the condition of being a crossing means that a path $P_1: i \pathto \pi(i_1)$ must intersect any path $P_2:i_2 \pathto \pi(i_2)$ in any network embedded on a disk.
However, when our surface is not a disk it can happen that $(i_1, i_2)$ is a crossing of $\pi$ but paths  $P_1: i \pathto \pi(i_1)$ and $P_2:i_2 \pathto \pi(i_2)$ do not intersect.
See Figure~\ref{fig:crossing} for a pictorial representation of a crossing on the disk and an example of paths of the annulus which come from a crossing but do not intersect.
We let $\xing(\pi)$ denote the number of crossings of $\pi$.

\begin{figure}
\centering
\scalebox{.7}{\includegraphics{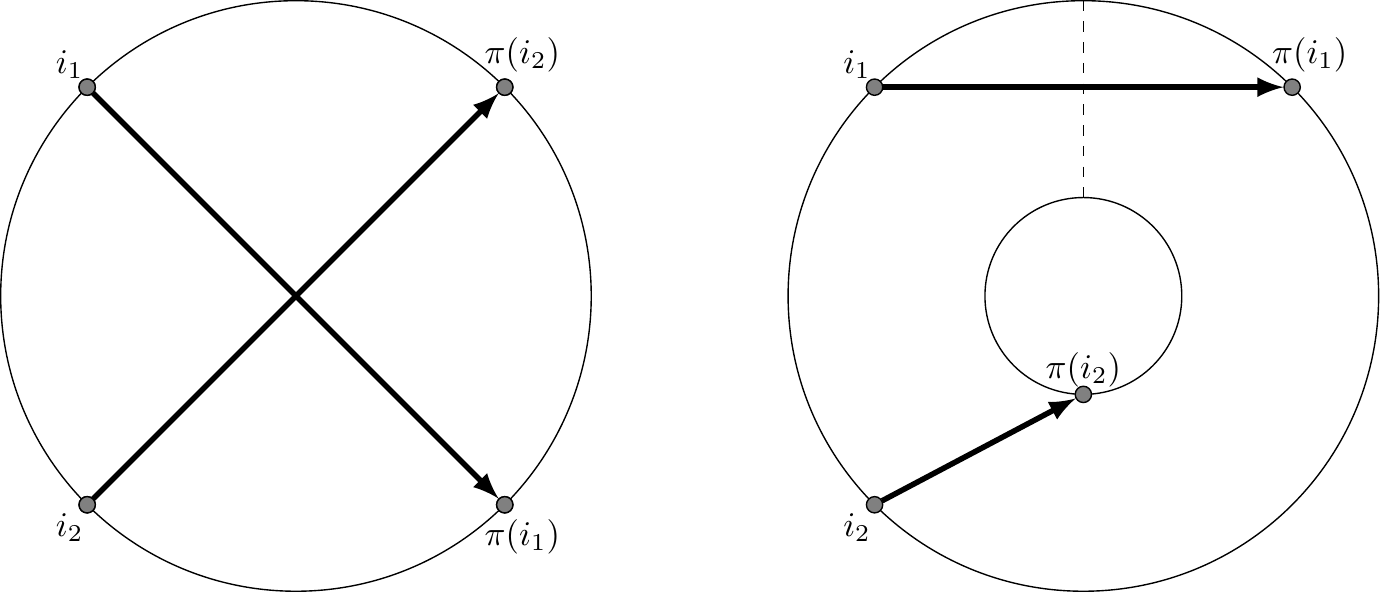}}
\caption{Crossings on the disk and the annulus. Here boundary vertices are ordered $i_1 < i_2 < \pi(i_1) < \pi(i_2)$.}
\label{fig:crossing}
\end{figure}

If $|I| = |J| = k$ then a bijection $\pi: I \to J$ determines a unique permutation $\pi \in S_k$ by standardizing $I$ and $J$.
We let $\inv(\pi)$ denote the number of inversion of $\pi$ when view as an element of $S_k$.
We let $s_{i,j}$ denote the number of elements of $I$ strictly between $i$ and $j$.

\begin{lem}[\cite{Tal08}]
If $I, J \subset [n]$ with $|I| = |J|$ and $\pi: I \to J$ is a bijection such that $\pi(i) = i$ for all $i \in I \cap J$, then
$$(-1)^{\xing(\pi)} = (-1)^{\inv(\pi)} \prod_{i \in I} (-1)^{s_{i, \pi(i)}}.$$
\label{lem:xing}
\end{lem}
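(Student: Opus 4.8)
The plan is to prove the equivalent mod-$2$ statement
$$\xing(\pi) \equiv \inv(\pi) + \sum_{i \in I} s_{i,\pi(i)} \pmod{2},$$
and to do so by rewriting all three quantities as sums over the $\binom{|I|}{2}$ unordered pairs $\{i_1,i_2\} \subseteq I$ with $i_1 < i_2$. For $\xing$ and $\inv$ this is immediate from their definitions. For the last term I would first expand $s_{i,\pi(i)} = \sum_{i' \in I} [\,i' \text{ lies strictly between } i \text{ and } \pi(i)\,]$, so that $\sum_{i \in I} s_{i,\pi(i)}$ becomes a sum over ordered pairs $(i,i')$ of distinct elements of $I$; regrouping into unordered pairs, the contribution of $\{i_1,i_2\}$ is $S = [\,i_2 \in (i_1, \pi(i_1))\,] + [\,i_1 \in (i_2, \pi(i_2))\,]$, where $(x,y)$ denotes the open interval between $x$ and $y$ and $[\,\cdot\,]$ is the Iverson bracket. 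It then suffices to prove a single congruence for each pair.

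Fix $i_1 < i_2$ and write $a = \pi(i_1)$, $b = \pi(i_2)$. The contribution of the pair to $\inv(\pi)$ is $V = [\,a > b\,]$ (standardizing preserves order, so inversions of the standardized permutation are exactly the pairs with $a>b$), and since $i_1 < i_2$ the contribution $S$ above simplifies to $S = [\,a > i_2\,] + [\,b < i_1\,]$. For $\xing$, I would show its contribution equals $X = [\,i_2 \in (i_1,a)\,] \oplus [\,b \in (i_1,a)\,]$: the sign condition defining a crossing says precisely that the chords $\{i_1,a\}$ and $\{i_2,b\}$ interleave on the boundary circle, which for points labelled $1,\dots,n$ in cyclic order means exactly one of $i_2,b$ lies strictly inside the arc $(i_1,a)$. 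Working modulo $2$ and using $[\,i_2 \in (i_1,a)\,] = [\,a > i_2\,]$, the desired per-pair congruence $X \equiv V + S$ collapses, after cancelling $[\,a>i_2\,]$, to the purely order-theoretic statement
$$[\,b \in (i_1,a)\,] \equiv [\,a > b\,] + [\,b < i_1\,] \pmod{2},$$
which involves only the three numbers $i_1, a, b$ and is verified by checking the six possible linear orderings of them.

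The step requiring the most care is the handling of degeneracies, where a chord shrinks to a point or two chords share an endpoint, since there the crossing sign vanishes rather than being $\pm 1$. Here I would invoke the hypotheses: because $\pi$ is a bijection with $\pi(i)=i$ on $I \cap J$, the equalities $a = i_2$ and $b = i_1$ are impossible, as either would force $i_2$ (respectively $i_1$) into $I \cap J$ and hence be fixed, contradicting injectivity. The only remaining degeneracies are the fixed points $a = i_1$ or $b = i_2$; in each the interval $(i_1,a)$ is empty or the two Iverson brackets defining $X$ coincide, so $X = 0$ while $V+S$ reduces to an even quantity of the form $2[\,b < i_1\,]$ or $2[\,a > i_2\,]$, and the congruence still holds. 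Summing the per-pair congruence over all pairs $\{i_1,i_2\}$ then yields $\xing(\pi) \equiv \inv(\pi) + \sum_{i \in I} s_{i,\pi(i)} \pmod{2}$, which is equivalent to the claimed identity $(-1)^{\xing(\pi)} = (-1)^{\inv(\pi)} \prod_{i \in I} (-1)^{s_{i,\pi(i)}}$.
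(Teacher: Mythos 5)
Your proof is correct. Note, however, that the paper does not actually prove this lemma: its ``proof'' is a one-line citation, stating that the identity is established in the course of the proof of Proposition 2.12 of Talaska's paper, where it arises inside a larger sign-tracking argument for the planar boundary measurement formula. What you have written is therefore a genuinely self-contained alternative. Your pairwise decomposition is the natural one --- all three statistics $\xing(\pi)$, $\inv(\pi)$, and $\sum_i s_{i,\pi(i)}$ are sums over unordered pairs $\{i_1,i_2\}\subseteq I$, and after the cancellation of $[\,a>i_2\,]$ the per-pair congruence reduces to a three-element order check --- and your treatment of the degenerate configurations is the part that actually uses the hypothesis: the identity would fail on a pair with $\pi(i_1)=i_2$ (the sign product vanishes while your interleaving count can be $1$), and you correctly observe that $\pi(i)=i$ on $I\cap J$ together with injectivity forbids exactly the two dangerous coincidences $a=i_2$ and $b=i_1$, while the harmless fixed-point coincidences contribute $0$ to both sides. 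The payoff of your route is that it isolates the lemma as a purely order-theoretic fact about bijections $I\to J$, independent of any network, and makes visible precisely where the fixed-point hypothesis enters; the cost is only that it duplicates work already available in the literature.
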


\begin{proof}
This is shown during the proof of~\cite[Proposition 2.12]{Tal08}.
\end{proof}

We now give our formula for the Pl{\"u}cker coordinates of the boundary measurement map.

\begin{cor}
If $N = (V,E)$ is a perfectly oriented network embedded on a closed orientable surface with boundary, then for any $J \subseteq K$ with $|I| = |J|$
$$\Delta_{I,J}(B(N,\{x_e\}_{e \in E})) = \frac{\sum_{\bF \in \F_{I,J}(N)} (-1)^{c(\bF)}\wt(\bF)}{\sum_{\bC \in \C(N)} \wt(\bC)}$$
where $c(\bF) = \xing(\pi) + \sum_{P \in \bP} (r_P + 1)$ if $\bF = (\bP,\bC)$.
\label{cor:flow}
\end{cor}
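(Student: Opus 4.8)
The plan is to deduce the formula directly from Theorem~\ref{thm:sign} and Talaska's formula~(\ref{eq:Talaska1}), reducing everything to a bookkeeping of signs. Since $A$ is the submatrix of $M$ on rows $I$ and columns $K$, for $J \subseteq K$ we have $\Delta_{I,J}(A) = \Delta_{I,J}(M)$, so Theorem~\ref{thm:sign} gives
$$\Delta_{I,J}(B(N,\{x_e\})) = \Delta_{I,J}(M(N,\{\epsilon_e x_e\})) = \Delta_{I,J}(M)\big|_{x_e \mapsto \epsilon_e x_e}.$$
I would then substitute $x_e \mapsto \epsilon_e x_e$ into the right-hand side of~(\ref{eq:Talaska1}) and compare the resulting numerator and denominator with those of the corollary term by term, since the substitution acts monomially on each $\wt(\bP)$ and $\wt(\bC)$.

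The one fact that makes both the numerator and the denominator collapse correctly is
$$\prod_{e \in C} \epsilon_e = -1 \qquad \text{for every simple cycle } C. \qquad (\star)$$
Granting $(\star)$, the denominator is immediate: for $\bC \in \C(N)$ we get $\wt(\bC)|_{\epsilon x} = (-1)^{|\bC|}\wt(\bC)$, which cancels $\sgn(\bC) = (-1)^{|\bC|}$ and turns $\sum_{\bC} \sgn(\bC)\wt(\bC)|_{\epsilon x}$ into $\sum_{\bC}\wt(\bC)$. For the numerator, fix $\bF = (\bP,\bC)$ with $\bP \in \P_{I,J,\pi}$. The cycles again contribute $(-1)^{|\bC|}$, cancelling $\sgn(\bC)$; and for each self-avoiding path $P_k : i_k \pathto j_{\pi(k)}$, which has both endpoints in $K$, property~(\ref{eq:diamond}) gives $\prod_{e \in P_k}\epsilon_e = (-1)^{s_{i_k,j_{\pi(k)}}+r_{P_k}+1}$. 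Multiplying by $\sgn(\bP) = \sgn(\pi) = (-1)^{\inv(\pi)}$ and applying Lemma~\ref{lem:xing}, whose hypothesis $\pi(i)=i$ on $I \cap J$ holds automatically since in a vertex-disjoint flow any $i \in I \cap J$ forces the empty path at $i$, converts $(-1)^{\inv(\pi)}\prod_{i}(-1)^{s_{i,\pi(i)}}$ into $(-1)^{\xing(\pi)}$. What survives is exactly $(-1)^{\xing(\pi) + \sum_{P \in \bP}(r_P+1)} = (-1)^{c(\bF)}$, as required.

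The hard part is establishing $(\star)$, and this is where geometry rather than bookkeeping enters. Note $(\star)$ is not a formal consequence of~(\ref{eq:diamond}) applied to a cycle viewed as a closed walk, because such a walk would close along the boundary of $T$ rather than along $C$ itself, and an embedded closed curve has odd rotation number, giving the wrong sign. Instead I would compare two genuine paths: choosing a vertex $v$ on $C$, let $P : i \pathto j$ pass through $v$ with $(i,j) \in I \times K$, and let $P'$ be $P$ with one traversal of $C$ spliced in at $v$. Since $\prod_{e\in P'}\epsilon_e = (\prod_{e\in P}\epsilon_e)\prod_{e\in C}\epsilon_e$, property~(\ref{eq:diamond}) applied to both $P$ and $P'$ (it holds for non-self-avoiding paths too) yields $\prod_{e\in C}\epsilon_e = (-1)^{\,r_{P'}+r_P}$, so $(\star)$ is equivalent to $r_{P'} - r_P$ being odd. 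I expect this parity computation to be the main obstacle: $C(P')$ is $C(P)$ with the closed curve $C$ spliced in at the point $v$, and I would control the change with Whitney's theorem (parity of the rotation number equals the number of self-intersections plus one), accounting for the self-intersection created at the splice together with the crossings between $C$ and the remainder of $C(P)$. This is the same style of argument already used in Lemmas~\ref{lem:T}, \ref{lem:anti}, and~\ref{lem:domain}, and I would treat cycles meeting no source-to-boundary path by the same splicing applied to the closed curve tracing the boundary of $T$, whose rotation number is odd by Lemma~\ref{lem:T}. Once $(\star)$ is in hand, assembling the numerator and denominator computations above completes the proof.
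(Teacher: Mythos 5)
Your proposal follows essentially the same route as the paper: apply Theorem~\ref{thm:sign}, substitute into Equation~(\ref{eq:Talaska1}), collapse the denominator using the fact that every simple cycle carries total edge sign $-1$, and handle the numerator with property~(\ref{eq:diamond}) together with Lemma~\ref{lem:xing}. Your key fact $(\star)$ is exactly what the paper asserts in a single sentence (``traversing a cycle in a perfectly oriented network will always change the rotation number by exactly one''), and your splicing sketch is the natural justification of that assertion, so there is no substantive difference in approach.
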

\begin{proof}
We first take $\{\epsilon_e\}_{e \in E}$ such that $B(N,\{x_e\}_{e \in E}) = A(N, \{\epsilon_e x_e\}_{e \in E})$ which necessarily exists by Theorem~\ref{thm:sign}.
Using Equation~(\ref{eq:Talaska1}) we obtain
$$\Delta_{I,J}(B(N,\{x_e\}_{e \in E})) = \frac{\sum_{\bF \in \F_{I,J}(N)} \sgn(\bF)\left(\prod_{e \in \bF} \epsilon_e \right)\wt(\bF)}{\sum_{\bC \in \C(N)} \sgn(\bC)\left(\prod_{e \in \bC} \epsilon_e \right)\wt(\bC)}.$$
Since traversing a cycle in a perfectly oriented network will always change the rotation number by exactly one, it follows that $\prod_{e \in \bC} \epsilon_e = (-1)^{|\bC|}$ for any $\bC \in \C(N)$.
Also, $\sgn(\bC) = (-1)^{|\bC|}$ for any $\bC \in \C(N)$ and thus 
$$\sgn(\bC)\left(\prod_{e \in \bC} \epsilon_e\right) = 1$$
and the denominator in the corollary is correct.

It remains to show the numerator in the corollary is correct.
That is we must show $\sgn(\bF)\left(\prod_{e \in \bF} \epsilon_e \right) = (-1)^{\xing(\pi) + \sum_{P \in \bP} (r_P + 1)}$ for any $\bF \in \F_{I,J}(N)$.
Take $\bF = (\bP, \bC) \in \F_{I,J}(N)$ and let $\pi: I \to J$ be the bijection determined by $\bP$, then
\begin{align*}
\sgn(\bF)\left(\prod_{e \in \bF} \epsilon_e \right) &= \sgn(\bP) \sgn(\bC) \left(\prod_{e \in \bP} \epsilon_e\right) \left(\prod_{e \in \bC} \epsilon_e\right)\\
&= \sgn(\pi) \left(\prod_{e \in \bP} \epsilon_e\right)\\
&= (-1)^{\inv(\pi)} \left(\prod_{i \in I} (-1)^{s_{i, \pi(i)}}\right) \left(\prod_{P \in \bP} (-1)^{r_P + 1}\right)\\
&= (-1)^{\xing(\pi) + \sum_{P \in \bP} (r_P + 1)}
\end{align*}
where we have made use of Lemma~\ref{lem:xing}.
\end{proof}

In the case our surface $S$ is a disk it is easy to see that the formula in Corollary~\ref{cor:flow} contains no negative terms.
On the disk for any flow $\bF = (\bP,\bC)$ we must have $\xing(\pi) = 0$ and $r_P = \pm 1$ for all $P \in \bP$.
Thus, $c(\bF)$ is even for any flow $\bF$ in the disk.
Hence we recover Equation~(\ref{eq:Talaska2}).
For more general surfaces we no longer have positivity, for example see Figure~\ref{fig:dependence}.

\section{The Gauge Action and Uniqueness of Signs}
\label{sec:gauge}

Given a directed network $N = (V,E)$ embedded on a surface the \emph{gauge group}  $\G = \G(N) := (\R^{*})^{\int(V)}$ where $\int(V) = V \setminus K$ denotes the set of interior vertices of $N$ and $\R^{*}$ denotes the nonzero real numbers.
We also define the \emph{weight space} $\X = \X(N)$ to be the set of all collections $\{a_e x_e\}_{e \in E}$ where $a_e \in \R^{*}$.
Notice here to each edge $e \in E$ we associate a nonzero real number $a_e$ and a formal variable $x_e$.
An element of the gauge group  $g=(g_v)_{v \in \int(V)} \in \G$ acts on an element of the weight space $X = \{ a_e x_e\}_{e \in E} \in \X$ as follows
$$g \cdot X  = \{(g \cdot a_e) x_e\}_{e \in E}$$
where if $e = (i,j)$ then $g \cdot a_e = g^{-1}_j a_e g_i$ (with the convention that $g_i = 1$ if $i \in K$ is a boundary vertex).
It follows that
$$A(N,X) = A(N, g \cdot X)$$
for all $g \in \G$ and $X \in \X$.
When $X,Y \in \X(N)$ are such that $Y = g \cdot X$ for some $g \in \G(N)$ we call $X$ and $Y$ \emph{gauge equivalent}.

\begin{algorithm}
\caption{Finding Gauge Transformation}\label{alg:gauge}
\begin{algorithmic}
\Require A directed network $N = (V,E)$ embedded on a closed orientable surface with boundary such that each vertex is contained in some path between boundary vertices and $X, Y\in \X(N)$ are such that $A(N,X) = A(N,Y)$.
\Function{FindGauge}{$X$,$Y$}
\State Let $X = \{a_ex_e\}_{e\in E}$ and $Y = \{b_ex_e\}_{e\in E}$
\State $g \gets (1)_{v \in \int(V)}$
\State $\mathcal{O} \gets \int(V)$
\State $\mathcal{C} \gets V \setminus \int(V)$
\While{$\mathcal{O} \ne \varnothing$}
\State Choose $(u,v) \in E$ such that $(u,v) \in \mathcal{C} \times \mathcal{O}$ 
\State $g_v \gets \frac{g \cdot a_{(u,v)}}{b_{(u,v)}}$
\State $\mathcal{C} \gets \mathcal{C} \cup \{v\}$
\State $\mathcal{O} \gets \mathcal{O} \setminus \{v\}$
\EndWhile
\State \Return $g$ \Comment{$g$ returned will be such that $g \cdot X = Y$}
\EndFunction
\end{algorithmic}
\end{algorithm}

\begin{thm}
Let $N = (V,E)$ be a directed network embedded on a closed orientable surface with boundary such that every vertex in contained in some path between boundary vertices, then $A(N,X) = A(N,Y)$ for $X,Y \in \X(N)$ if and only if $X$ and $Y$ are gauge equivalent.
\label{thm:gauge}
\end{thm}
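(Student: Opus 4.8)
The plan is to prove the two directions of the biconditional separately. The backward direction is the easy one: if $Y = g \cdot X$ for some $g \in \G(N)$, then the identity $A(N,X) = A(N, g \cdot X)$ stated in the excerpt gives $A(N,X) = A(N,Y)$ immediately. So the real content is the forward direction, where I assume $A(N,X) = A(N,Y)$ and must produce a gauge transformation $g$ with $g \cdot X = Y$. For this I would argue that Algorithm~\ref{alg:gauge} terminates and returns a valid $g$, so the strategy is to show the algorithm is well-defined and that its output satisfies $g \cdot X = Y$.

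First I would establish that the algorithm is well-defined, i.e.\ that at each iteration of the while loop there actually exists an edge $(u,v) \in \mathcal{C} \times \mathcal{O}$ with $u$ already closed and $v$ still open. The hypothesis that every vertex lies on some path between boundary vertices is exactly what guarantees connectivity of the right sort: since boundary vertices start in $\mathcal{C}$ and every interior vertex is reachable from a boundary vertex along a directed path, whenever $\mathcal{O}$ is nonempty there must be a closed-to-open edge, or else some open vertex could not be reached from the boundary. I would make this precise by a reachability argument, using the path hypothesis to rule out an ``isolated'' open set. This guarantees termination in exactly $|\int(V)|$ steps, one per interior vertex.

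The core of the forward direction is showing that the $g$ built edge-by-edge genuinely satisfies $g \cdot Y = $ \dots, equivalently $(g \cdot a_e) = b_e$ for \emph{every} edge $e$, not just the edges selected during the loop. For a selected edge $(u,v)$ the definition $g_v := (g \cdot a_{(u,v)})/b_{(u,v)}$ forces the matching to hold by construction. The difficulty is the \emph{unselected} edges: an edge $e = (i,j)$ whose endpoints were both closed at earlier stages already has its gauge values fixed, and I must check these are consistent, i.e.\ that $(g\cdot a_e) = b_e$ holds automatically. Here is where the hypothesis $A(N,X) = A(N,Y)$ must be used. The idea is that $g \cdot X$ and $Y$ have equal weighted path matrices (since $g \cdot X$ has the same matrix as $X$ by gauge invariance, which equals that of $Y$), and by comparing path weights through the relevant vertices one extracts that the per-edge weight ratios must agree; any inconsistency on an unselected edge would produce distinct entries in a comparison of path sums, contradicting $A(N, g\cdot X) = A(N,Y)$. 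Concretely I would consider, for an unselected edge $e=(i,j)$, a path between boundary vertices through $e$ (which exists by hypothesis) and compare its contribution in $g \cdot X$ versus $Y$ to force $(g \cdot a_e) = b_e$.

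The main obstacle I anticipate is precisely this last consistency check on unselected edges: turning ``the path matrices agree'' into ``every individual edge-weight ratio agrees'' requires isolating the contribution of a single edge from the full (possibly infinite, cycle-containing) path sums recorded in $A$. The cleanest route is to treat the $x_e$ as genuine formal indeterminates rather than specialized reals, so that equality of the power series $A(N, g\cdot X) = A(N, Y)$ can be read off monomial-by-monomial; then matching the coefficient of a suitable monomial corresponding to a short path through $e$ pins down the ratio $(g\cdot a_e)/b_e = 1$. Making the selection of that distinguishing monomial precise, while accounting for the freedom in how a path may revisit vertices, is the technical heart of the argument.
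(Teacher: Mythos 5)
Your overall architecture matches the paper's: the backward direction is immediate from gauge invariance of $A$, and the forward direction is proved by showing Algorithm~\ref{alg:gauge} terminates (using the hypothesis that every vertex lies on a boundary-to-boundary path, so a closed-to-open edge always exists) and maintains the invariant that $g\cdot a_e = b_e$ for all edges with both endpoints in $\mathcal{C}$. Your key input is also the right one: treating the $x_e$ as formal indeterminates, equality of the power series $A(N,X)=A(N,Y)$ gives $\prod_{e\in P} a_e = \prod_{e\in P} b_e$ for every path $P$ between boundary vertices, by matching monomial coefficients. The base case (edges between boundary vertices) and the selected edges are handled exactly as you propose.

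However, the step you yourself flag as the technical heart is genuinely missing, and the single-path version you sketch would fail. When a new vertex $v$ is closed via the selected edge $(u,v)$ and you must verify an unselected edge $(w,v)$ with $w\in\mathcal{C}$, a single boundary-to-boundary path through $(w,v)$ yields one product identity involving not only $g\cdot a_{(w,v)}$ but also every edge on the continuation from $v$ out to the boundary --- and those edges are in general not yet in $\mathcal{C}\times\mathcal{C}$, so their gauge-corrected weights are not yet known to equal the corresponding $b_e$. One equation in several unknowns does not pin down the ratio $(g\cdot a_{(w,v)})/b_{(w,v)}$. The paper's device is to compare \emph{two} paths sharing the same tail: with heads $P_u: i_u \pathto u$ and $P_w: i_w \pathto w$ chosen entirely inside $\mathcal{C}$ and an arbitrary (possibly unverified) tail $P: v \pathto j$, form $P_1 = P_u(u,v)P$ and $P_2 = P_w(w,v)P$ and divide the two resulting identities; the unverified tail cancels, the heads and the already-verified edge $(u,v)$ are controlled by the invariant, and what remains is exactly $g\cdot a_{(w,v)} = b_{(w,v)}$ (symmetrically, for an edge $(v,w)$ out of $v$ one uses a common tail starting at $w$). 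Without this common-tail cancellation, or an equivalent mechanism for isolating a single edge from the unverified part of the network, the induction does not close.
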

\begin{proof}
Our proof will show that Algorithm~\ref{alg:gauge} returns $g \in \G(N)$ such that $g \cdot X = Y$.
Let $X = \{a_e x_e\}_{e \in E}$ and $Y = \{b_e x_e\}_{e \in E}$.
First note that Algorithm~\ref{alg:gauge} will always terminate since each vertex of $N$ is contained in some path between boundary vertices and $\mathcal{C}$ initially consists of all the boundary vertices.
Furthermore, when the algorithm terminates $\mathcal{C} = V$.
Also, observe that if $v \in \mathcal{C} \cap \int(V)$ at some stage of the algorithm there is a directed path from some boundary vertex to the vertex $v$ passing through only vertices in $\mathcal{C}$.
Lastly, we note that at a given stage of the algorithm $g_v = 1$ whenever $v \not\in \mathcal{C}$.
It suffices to show that at each step of Algorithm~\ref{alg:gauge} we have the following property:
\begin{equation}
g\cdot a_e = b_e \mathrm{\;for\;all\;} e \in \mathcal{C} \times \mathcal{C}
\tag{$\star$}
\label{eq:star}
\end{equation}

Initially $\mathcal{C}$ consists of only the boundary vertices and $g = (1)_{v \in \int(V)}$.
At this stage we have $g \cdot a_e = a_e$ for all $e \in E$, and $a_e = b_e$ whenever $e \in \mathcal{C} \times \mathcal{C}$ by the assumption that $A(N,X) = A(N,Y)$.
So, initially we have property~(\ref{eq:star}).

We now consider extending the set of vertices $\mathcal{C}$.
Suppose we are at some stage of the algorithm where $g \cdot a_e = b_e$ for all $e \in \mathcal{C} \times \mathcal{C}$.
Consider $(u,v) \in \mathcal{C} \times \mathcal{O}$ and let $\mathcal{C}' = \mathcal{C} \cup \{v\}$ and $g'$ be such that $g'_v = g \cdot a_{(u,v)}/b_{(u,v)}$ and $g'_x = g_x$ for $x \ne v$.
Now we must show for all $e \in \mathcal{C}' \times \mathcal{C}'$ that $g' \cdot a_e = b_e$.
We need only consider edges $e \in \mathcal{C}' \times \mathcal{C}'$  incident on $v$ as $g' \cdot a_e = g \cdot a_e = b_e$ for $e \in \mathcal{C}' \times \mathcal{C}'$ with $e$ not incident on $v$.
First we compute
\begin{align*}
g' \cdot a_{(u,v)} &= (g'_v)^{-1} a_{(u,v)} g'_u\\
&= \frac{b_{(u,v)} a_{(u,v)} g'_u}{g \cdot a_{(u,v)}}\\
&= \frac{b_{(u,v)} (g \cdot a_{(u,v)})}{g \cdot a_{(u,v)}}\\
&= b_{(u,v)}
\end{align*}
and conclude $g' \cdot a_{(u,v)} = b_{(u,v)}$.

Consider $(w,v) \in E$ such that $w \in \mathcal{C}$.
We can find paths $P_u:i_u \pathto u$ and $P_w:i_w \pathto w$ passing through only vertices of $\mathcal{C}$ for $i_u,i_w \in I$.
Choose some path $P:v \pathto j$ for $j \in K$ so we get paths $P_1 = P_u (u,v) P:i_u \pathto j$ and $P_2 = P_w (w,v) P: i_w \pathto j$.
It then follows that 
\begin{align*}
\prod_{e \in P_1} a_e &= \prod_{e \in P_1} b_e & \prod_{e \in P_2} a_e &= \prod_{e \in P_2} b_e\\
\end{align*}
and so also
\begin{align*}
\prod_{e \in P_1} g' \cdot a_e &= \prod_{e \in P_1} b_e &\prod_{e \in P_2} g' \cdot a_e &= \prod_{e \in P_2} b_e.
\end{align*}
Considering ratios we see
\begin{equation*}
\frac{\left(\prod_{e \in P_u} g' \cdot a_e\right) (g' \cdot a_{(u,v)}) \left(\prod_{e \in P} g'\cdot a_e\right)}{\left(\prod_{e \in P_w} g' \cdot a_e\right) (g' \cdot a_{(w,v)}) \left(\prod_{e \in P} g'\cdot a_e\right)} = \frac{\left(\prod_{e \in P_u} b_e\right) (b_{(u,v)}) \left(\prod_{e \in P} b_e\right)}{\left(\prod_{e \in P_w} b_e\right) (b_{(w,v)}) \left(\prod_{e \in P} b_e\right)}
\end{equation*}
and recalling $g' \cdot a_e = b_e$ for $e \in P_u \cup P_w \subseteq \mathcal{C}$ we can conclude $g' \cdot a_{(w,v)} = b_{(w,v)}$ as desired.

Consider $(v,w) \in E$ such that $w \in \mathcal{C}$.
We can find paths $P_u:i_u \pathto u$ and $P_w:i_w \pathto w$ passing through only vertices of $\mathcal{C}$ for $i_u,i_w \in I$.
Choose some path $P:w \pathto j$ for $j \in K$ so we get paths $P_1 = P_u (u,v)(v,w) P:i_u \pathto j$ and $P_2 = P_w P: i_w \pathto j$.
It then follows that 
\begin{align*}
\prod_{e \in P_1} a_e &= \prod_{e \in P_1} b_e & \prod_{e \in P_2} a_e &= \prod_{e \in P_2} b_e\\
\end{align*}
and so also
\begin{align*}
\prod_{e \in P_1} g' \cdot a_e &= \prod_{e \in P_1} b_e &\prod_{e \in P_2} g' \cdot a_e &= \prod_{e \in P_2} b_e.
\end{align*}
Considering ratios we see
\begin{equation*}
\frac{\left(\prod_{e \in P_u} g' \cdot a_e\right) (g' \cdot a_{(u,v)})(g' \cdot a_{(v,w)}) \left(\prod_{e \in P} g'\cdot a_e\right)}{\left(\prod_{e \in P_w} g' \cdot a_e\right)\left(\prod_{e \in P} g'\cdot a_e\right)} = \frac{\left(\prod_{e \in P_u} b_e\right) (b_{(u,v)})(b_{(v,w)}) \left(\prod_{e \in P} b_e\right)}{\left(\prod_{e \in P_w} b_e\right) \left(\prod_{e \in P} b_e\right)}
\end{equation*}
and recalling $g' \cdot a_e = b_e$ for $e \in P_u \cup P_w \subseteq \mathcal{C}$ and we can conclude $g' \cdot a_{(w,v)} = b_{(w,v)}$ as desired.
Therefore property~(\ref{eq:star}) extends at each step of Algorithm~\ref{alg:gauge} and the theorem is proven.
\end{proof}

Theorem~\ref{thm:gauge} has the following corollary which says that the choice of signs guaranteed by Theorem~\ref{thm:sign} is unique up to gauge transformation provided each vertex is contained in some path between boundary vertices.

\begin{cor}
If $N = (V,E)$ is a directed network embedded on a closed orientable surface with boundary such that every vertex in contained in some path between boundary vertices and there exists a collections $\{\epsilon_e\}_{e \in E}, \{\epsilon'_e\}_{e \in E} \in \{\pm 1\}^E$ such that
$$B(N,\{x_e\}_{e \in E}) = A(N, \{\epsilon_e x_e\}_{e \in E}) \mathrm{\quad and \quad} B(N,\{x_e\}_{e \in E}) = A(N, \{\epsilon'_e x_e\}_{e \in E})$$
then $\{\epsilon_e\}_{e \in E}$ and $\{\epsilon'_e\}_{e \in E}$ are gauge equivalent.
\end{cor}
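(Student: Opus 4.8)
The plan is to recognize this corollary as an immediate consequence of Theorem~\ref{thm:gauge}: the two sign collections can be read off as two points of the weight space $\X(N)$ that produce the same weighted path matrix, so the uniqueness-up-to-gauge statement for $A$ transfers directly to the signs. First I would set $X = \{\epsilon_e x_e\}_{e \in E}$ and $Y = \{\epsilon'_e x_e\}_{e \in E}$. Because each $\epsilon_e$ and $\epsilon'_e$ lies in $\{\pm 1\} \subseteq \R^{*}$, both $X$ and $Y$ are genuine elements of $\X(N)$, so the machinery of Section~\ref{sec:gauge} applies to them.

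Next I would combine the two hypotheses. Since $B(N,\{x_e\}_{e \in E}) = A(N, X)$ and $B(N,\{x_e\}_{e \in E}) = A(N, Y)$, transitivity gives $A(N, X) = A(N, Y)$. The standing assumption that every vertex of $N$ lies on some path between boundary vertices is exactly the hypothesis of Theorem~\ref{thm:gauge}, so that theorem applies verbatim and yields that $X$ and $Y$ are gauge equivalent, i.e.\ there is some $g \in \G(N)$ with $g \cdot X = Y$. Unwinding the definition of the gauge action, this says $\epsilon'_e = g_j^{-1} \epsilon_e g_i$ for every edge $e = (i,j)$, which is precisely the assertion that $\{\epsilon_e\}_{e \in E}$ and $\{\epsilon'_e\}_{e \in E}$ are gauge equivalent.

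There is essentially no real obstacle here: the entire combinatorial content lives in Theorem~\ref{thm:gauge}, and this corollary is a matter of correctly casting the two sign assignments as weight-space elements. The only point worth a sentence of care is the observation that $\pm 1$ are nonzero reals, so that the sign collections genuinely lie in $\X(N)$ and the theorem is applicable. One might also remark that the gauge element $g$ produced by Algorithm~\ref{alg:gauge} need not itself take values in $\{\pm 1\}$, but this is irrelevant, since gauge equivalence only requires $g \in \G(N) = (\R^{*})^{\int(V)}$.
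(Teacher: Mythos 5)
Your proposal is correct and is exactly the argument the paper intends (the paper states this corollary without written proof, as an immediate consequence of Theorem~\ref{thm:gauge}): view the two sign collections as elements $X, Y \in \X(N)$, deduce $A(N,X)=A(N,Y)$ from the two hypotheses, and apply the theorem. No issues.
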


\section{Acknowledgments}
The author thanks Michael Shapiro for his helpful feedback in the preparation this paper.
This work was partially supported by the National Science Foundation grant DMS-1101369.

\bibliographystyle{alpha}
\bibliography{SigningEdgesBib}

\end{document}